\newcommand{\p}{\bm{\mu}}      
\newcommand{\pdim}{n_{\mu}}         
\newcommand{\nx}{n_x}      
\newcommand{\ny}{n_y}
\newcommand{\nn}{n_n}
\newcommand{\udim}{n_u}
\newcommand{\msig}{\p_{\sigma}}
\newcommand{\psig}{p_{\sigma}}
\newcommand{\sigSet}{\sigma \in \Sigma}
\newcommand{\word}{\sigma_{1}\sigma_{2}\cdots \sigma_{k}}
\newcommand{\wordSet}[1]{w \in \Sigma^{#1}}
\newcommand{\expect}[1]{E\left[#1 \right]}
\newcommand{\filt}{\mathcal{F}_{t}^{\p,-}}
\newcommand{\filtp}{\mathcal{F}_{t}^{\p,+}}
\newcommand{\filtr}{\mathcal{F}_{t}^{\mathbf{r}}}
\newcommand{\uw}[1]{\p_{#1}}
\newcommand{\zwr}{\mathbf{z}^{\mathbf{r}}_{w} }
\newcommand{\zvr}{\mathbf{z}^{\mathbf{r}}_{v} }
\newcommand{\zsvr}{\mathbf{z}^{\mathbf{r}}_{\sigma v} }
\newcommand{\timeset}{t \in \mathbb{Z}}
\newcommand{\yb}{\mathbf{y}}
\newcommand{\vb}{\mathbf{v}}
\newcommand{\eb}{\mathbf{e}}
\newcommand{\xb}{\mathbf{x}}
\newcommand{\ub}{\mathbf{u}}
\newcommand{\z}{\mathbf{z}}
\renewcommand{\r}{\mathbf{r}}
\newtheorem{Notation}{Notation}
\newtheorem{Definition}{Definition}
\newtheorem{Example}{Example}
\newtheorem{Theorem}{Theorem}
\newtheorem{Corollary}{Corollary}
\newtheorem{Lemma}{Lemma}
\newtheorem{Remark}{Remark}
\newtheorem{Assumption}{Assumption}
\newenvironment{pf}[1]{\begin{proof}{#1}}{\end{proof}}
\newcommand{\btheta}{\boldsymbol{\theta}}
\newcommand{\GBS}{\textbf{GBS}}
\newcommand{\y}{\textbf{y}}
\newcommand{\bu}{\textbf{u}}
\newcommand{\x}{\textbf{x}}
\newcommand{\e}{\textbf{e}}
\newcommand{\mathleft}{\@fleqntrue\@mathmargin0pt}
\title{ Towards stochastic realization theory for Generalized Linear Switched Systems with inputs: 
       decomposition into stochastic and deterministic components and existence and uniqueness of innovation form}
\author{Elie Rouphael, Manas Mejari, Mihaly Petreczky,  Lotfi Belkoura%
\thanks{E. Rouphael, M. Petreczky and  L. Belkoura are with
Univ. Lille, CNRS, Centrale Lille, UMR 9189 CRIStAL, Lille, France.
{\tt\small first.lastname@univ-lille.fr}, 
}
\thanks{M. Mejari is with Swiss AI lab IDSIA-SUPSI, Lugano, Switzerland.
{\tt\small manas.mejari@supsi.ch}%
}
}
\begin{document}
\maketitle
\begin{abstract}
In this paper, we study a class of stochastic Generalized Linear Switched System (GLSS), which includes subclasses of jump-Markov, piecewide-linear and Linear Parameter-Varying (LPV) systems. 
We prove that the output of such systems can be decomposed into deterministic and stochastic components. Using this decomposition, we show existence of state-space representation in innovation  form, and we provide sufficient conditions for such representations to be  minimal and unique up to isomorphism. 
\end{abstract}

\section{Introduction}
\label{sect:intro}
A discrete-time stochastic \emph{Generalized Linear Switched System (abbreviated as GLSS)} is a system of the form 
\begin{equation}
\label{eqn:LPV_SSA}
	\mathbf{S} \left\{ 
        \begin{aligned}
        & \mathbf{x}(t+1) = \sum_{i=1}^{\pdim} (A_i\xb(t)+B_i\ub(t)+K_i\vb(t))\p_i(t) \\
	& \mathbf{y}(t) = C\mathbf{x}(t) + D\mathbf{u}(t) + F \vb(t), ~~ t \in \mathbb{Z}
       \end{aligned} \right.
\end{equation}
where
$A_{i} \in \mathbb{R}^{\nx \times \nx } $, $B_{i} \in \mathbb{R}^{\nx \times \udim } $, $K_{i} \in \mathbb{R}^{\nx \times \nn }$, $i=1, \ldots, \pdim$,  $C \in \mathbb{R}^{\ny \times \nx}$ and $D \in \mathbb{R}^{\ny \times \udim}$, $F \in \mathbb{R}^{\ny \times \nn}$  are constant matrices. 
The stochastic process $\xb,\ub,\yb,\vb$ and $\p=(\p_1,\ldots,\p_{\pdim})^T$
are the state, input, output, noise and switching processes respectively,
taking values respectively in
$\mathbb{R}^{\nx}, \mathbb{R}^{\udim},\mathbb{R}^{\ny},
\mathbb{R}^{\nn}, \mathbb{R}^{\pdim}$ and defined on $\mathbb{Z}$.

Intuitively, \eqref{eqn:LPV_SSA} is a generalization of linear switched systems to the case of infinitely many discrete modes.  
If $\p(t)$ takes values in the set of unit vectors, i.e.,
$\p_i(t) \in \{0,1\}$, $i=1,\ldots,\pdim$, $\sum_{i=1}^{\pdim} \p_i(t)=1$, then \eqref{eqn:LPV_SSA} is a \emph{switched system \cite{Sun:Book}.} 
If, in addition,
the process $\theta(t)$, defined by
$\theta(t)=i\!\! \iff \!\! \p_i(t)=1$, is a Markov chain,
then \eqref{eqn:LPV_SSA} is a jump-Markov system 
\cite{CostaBook}.
If $\p$ takes values from a possibly infinite set and 
$\p_1=1$, then  \eqref{eqn:LPV_SSA} could be viewed as a subclass of 
\emph{linear parameter varying (LPV)} systems \cite{Toth2010SpringerBook}
	with an \emph{affine dependence}  on scheduling, and $(\p_2,\ldots,\p_{\pdim})^T$ 
 corresponds to the scheduling 
signal. However, in contrast to LPV systems, in general we are
agnostic about  the role of $\p$ in control, hence the use of the term GLSS. 

\textbf{Context and motivation:}
 Consider the  following deterministic counterpart of \eqref{eqn:LPV_SSA}
\begin{equation}
\label{eqn:LPV_SSAd}
	\begin{split}
        & x(t+1) = \sum_{i=1}^{\pdim} (A_ix(t)+B_iu(t)+K_iv(t))\mu_i(t) \\
	& y(t) = Cx(t) + Du(t) + F v(t), t \in \mathbb{Z}
       \end{split}
\end{equation}
where all the signals are deterministic
and which 
      describes the response of the true system for \textbf{any} input, switching and noise realization, and not only for the samples from $\bu,\p,\vb$.
For the tuple of matrices 
$S=(\{A_i,B_i,K_i\}_{i=0}^{\pdim},C,D,F)$
of \eqref{eqn:LPV_SSA} define the \emph{deterministic behavior}  $\mathcal{B}_S$
of $S$ as the as the set of all 
tuples of deterministic trajectories $(u,\mu,y)$  such that there exists trajectories $x$ and $v$ for which \eqref{eqn:LPV_SSAd} 
holds. Clearly, all samples paths of $(\bu,\p,\vb)$ belong to $\mathcal{B}_{S}$.

In system identification of switched and LPV systems, we want to find matrices 
	$\hat{S}=(\{\hat{A}_i,\hat{B}_i,\hat{K}_i\}_{i=0}^{\pdim},\hat{C},\hat{D},\hat{F})$ from one single tuple of trajectories $(u,\mu,y)$ from
$\mathcal{B}_S$, such that the deterministic behavior $\mathcal{B}_{\hat{S}}$ is an approximation of 
$\mathcal{B}_{S}$. For jump-Markov system, the task is similar, 
but in the definition of the deterministic 
behavior 
the switching and noise trajectories are sampled from processes $\p$ and $\vb$.

	As a tool to achieve this goal, we assume that
   the data $(u,\mu,y)$ used for identification,
    are sampled from the processes
    $(\bu,\p,\y)$, where $\y$ is 
    generated by the data generator
    \eqref{eqn:LPV_SSA}.
	Pragmatically, we cannot exclude random effects during the identification experiment (measurement error, etc.), and
    randomness can be used for  statistical reasoning about algorithms.
    For the latter, we view $\hat{S}$ as a statistics for 
    the matrices of \eqref{eqn:LPV_SSA}, which, in turn,
    parameterize the joint distribution of $(\bu,\p,\y)$.
    However,  the estimated models should
    approximate the true system for
    all inputs and scheduling signals, and
    not only those which are sampled from
    $\bu$ and $\p$.
    %
%
Unfortunately, good statistical properties 
of $\hat{S}$, e.g., consistency, guarantee only that the output of the stochastic system determined by $\hat{S}$ is \emph{close} to that of the stochastic data generator \eqref{eqn:LPV_SSA}, 
for the  specific stochastic input $\bu$ and switching $\p$ and a choice of the noise process.
However, this does not generally imply that the deterministic behaviors $\mathcal{B}_{\hat{S}}$ and $\mathcal{B}_{S}$ of $\hat{S}$ and $S$ are close.
In fact, the outputs of two GLSS may be the same for the input 
$\bu$ and switching $\p$ used during identification,  but  be different for others \cite{rouphael2022minimal}.

In the LTI case, this was resolved by assuming that the data generator \eqref{eqn:LPV_SSA} and  the stochastic system corresponding to  $\hat{S}$ 
are minimal and in \emph{innovation form}. 
Since the matrices of  minimal systems in innovation form  
with the same outputs and inputs are related by a basis transformation  \cite{LindquistBook},
if the stochastic system corresponding to $\hat{S}$ is close to the data generator, then intuitively the matrices
$\hat{S}$ and $S$ are close after a suitable basis transformation. Hence, the deterministic behaviors of $\hat{S}$ and $S$
are close.
Moreover,  innovation form  was useful for developing system identification algorithms (e.g., subspace, prediction error minimization), and establishing a correspondence  between 
state-space representations and optimal predictors of $\y$ based on its past of $\bu$ and $\y$. 

A key step in the proof of existence and uniqueness of minimal LTI systems in innovation form is the 
decomposition $\y't)=\y^d(t)+\y^s(t)$ of the output
into two components. 
The component $\y^d$ is determined by the input and it is the output of a noiseless LTI system.  The component $\y^s$ is the output of an autonomous LTI system and it depends only on the noise. 
Then existence and uniqueness of minimal LTI systems in innovation form  follow by
applying realization theory
\cite{LindquistBook} to the autonomous LTI system generating $\y^s$.

\textbf{Contribution:}
In this paper we show an analogous result for GLSSs, i.e. we show that  $\y(t)=\y^d(t)+\y^s(t)$, where $\y^d$ is the output of a GLSS with no noise $\vb$, and $\y^s$ is the output of a GLSS with no input $\ub$. 
Furthermore, by using results on realization theory of 
GLSSs with no inputs \cite{PetreczkyBilinear}, 
we use this decomposition 
to show  existence of an innovation form for GLSSs with inputs. Moreover, we present sufficient conditions for
minimality and uniqueness  (up to change of basis) of 
GLSSs in innovation form. 
This  eans that minimal GLSSs in innovation form have the same
useful properties for system identification as their LTI counterparts. In particular, if two minimal GLSSs in innovation form generate (approximately) the same output for the data used of identification, then they will generate  (approximately) the same
output for any input and switching signal, including those which do not satisfy the assumptions of the identification experiment. 

\textbf{Related work:}
System identification in general, and subspace methods in particular,  of switched \cite{LauerBook}, jump-Markov \cite{JumpMarkov1} and LPV systems \cite{Toth2010SpringerBook,CoxTothSubspace,Wingerden09,pigalpv,Verdult02,Verdult04,tanaka2023state} is a well-established topic.
Stochastic realization theory of GLSSs with no inputs, i.e., jump-Markov systems with no inputs, bilinear systems  and autonomous stochastic LPV systems 
were addressed in \cite{PetreczkyBilinear}. With respect to \cite{PetreczkyBilinear}
the main difference is the presence of the control input $\bu$.

Existence of a decomposition and existence of innovation form appeared in
\cite{MejariLPVS2019}, but only for the case of 
LPV systems with zero mean i.i.d. scheduling,.
With respect to \cite{MejariLPVS2019}, the main
novelty is that we allow more general switching processes, including finite Markov chains, and that we address minimality and uniqueness 
of innovation representations. 
Moreover, 
in \cite{MejariLPVS2019} the proofs were  not presented, they were included in the report \cite{mejari2019realization}, 
the latter can be viewed as a preliminary version of this paper. 

The existence of innovation representation  was studied for LPV systems in \cite{CoxLPVSS,CoxTothSubspace}.
In contrast to this paper, in \cite{CoxLPVSS,CoxTothSubspace} the noise gain of the innovation representation has a dynamical dependence on the scheduling, and there is no claim on minimality and uniqueness of such representations.
In particular, it is unclear when the innovation representation from \cite{CoxLPVSS,CoxTothSubspace} 
generates the same output as the original model for all scheduling signals.
However, \cite{CoxLPVSS,CoxTothSubspace}  has the advantage that it is valid for any scheduling signal. 

\section{Preliminaries}
\label{sect:prelim}
Below we present the notation used in the paper. In addition, we recall a number of concepts from \cite{PetreczkyBilinear}, and then we use them to define the subclass of stationary GLSS for which our main results hold. 

\textbf{Probability theory:}
We use the standard terminology of probability theory \cite{Bilingsley}. All the random variables and stochastic processes are understood w.r.t. to a fixed probability space $\left(\Omega, \mathcal{F}, \mathcal{P}\right)$, where $\mathcal{F}$ is a $\sigma$-algebra over the sample space $\Omega$.
The expected value of a random variable $\mathbf{r}$ is denoted by $E[\mathbf{x}]$ and conditional expectation w.r.t. $\sigma$-algebra $\mathcal{F}$ is denoted by $\expect{\mathbf{r} \mid \mathcal{F}}$. 
The stochastic processes in this paper are discrete-time ones defined over the time-axis $\mathbb{Z}$.

\textbf{Finite sequences:}
In what follows  $\Sigma=\{1,\ldots,\pdim\}$.
A \emph{non empty word} over $\Sigma$ is a finite sequence of letters (elements) of $\Sigma$, i.e.,  
$w = \sigma_{1}\sigma_{2}\cdots \sigma_{k}$, for some $k \in \mathbb{N}$, $k >0$, $\sigma_{1}, \sigma_{2}, \ldots, \sigma_{k} \in \Sigma$; $|w|:=k$ is the 
length of $w$. The set of \emph{all} nonempty words is denoted by $\Sigma^{+}$. We denote the \emph{empty word} by $\epsilon$ and 
by convention $|\epsilon|=0$. Let $\Sigma^{*} = \{\epsilon \} \cup \Sigma^{+}$. The concatenation of two nonempty words $v = a_{1}a_{2}\cdots a_{m}$ and  $w= b_{1}b_{2}\cdots b_{n}$ is defined as $vw = a_{1}\cdots a_{m} b_{1} \cdots b_{n}$ for some $m,n > 0$. By convention $v\epsilon = \epsilon v = v$ for all $v \in \Sigma^{*}$. 

\textbf{Notation for matrices}
    We denote by $I_n$ the $n \times n$
    identity matrix. 
	Consider $n \times n$ square matrices $\{A_{\sigma}\}_{\sigSet}$. For any 
 $w = \sigma_{1}\sigma_{2}\cdots \sigma_{k} \in \Sigma^{+}$, $k\!>\!0$ and $\sigma_{1}, \ldots, \sigma_{k} \in \Sigma$, we define 
	$A_{w} = A_{\sigma_{k}}A_{\sigma_{k-1}}\cdots A_{\sigma_{1}}$.
	For an empty word $\epsilon$,  set $A_{\epsilon} = I_n$.

\textbf{Notions from \cite{PetreczkyBilinear}:
admissible switching, ZMWSII, SII processes:}
We first formulate our assumptions for the switching process.
For every word $\wordSet{+}$ where $w=\word$, $k \geq 1$, $\sigma_{1},\ldots, \sigma_{k} \in \Sigma$, we define the  process $\uw{w}$ as follows:
\begin{equation}
\label{eqn:uw}
		\uw{w}(t) = \uw{\sigma_{1}}(t-k+1)\uw{\sigma_{2}}(t-k+2)\cdots\uw{\sigma_{k}}(t)  
\end{equation}
For an empty word $w= \epsilon$, we set $\uw{\epsilon}(t)=1$. 
\begin{Definition}[Admissible process, \cite{PetreczkyBilinear}]
	\label{asm:A1}	
	The switching process $\p$ is called \emph{admissible}, if the following holds:

\textbf{1.} There exists a set $\mathcal{E} \subseteq \Sigma \times \Sigma$ such that:
\begin{itemize}
    \item[\textbf{--}] $\forall \sigma \in \Sigma, \exists \sigma' \in \Sigma : (\sigma,\sigma') \in \mathcal{E}$.
    \item[\textbf{--}] Define the set of admissible words $L$ as the set of all words $w \in \Sigma^{+}$ such that $w = \sigma_1\cdots\sigma_{k} \in \Sigma^{+}, \sigma_1,\dots,\sigma_{k} \in \Sigma, k > 0$ and for all $i = 1,\dots,k-1$, $(\sigma_i,\sigma_{i+1}) \in \mathcal{E}$. Then for all $w \in \Sigma^{+},w \notin L, \p_w=0$.
\end{itemize} 
 
\textbf{2.}
  Denote by $\mathscr{F}^{\p,-}_{t}$ the $\sigma$-algebra generated by the random variables $\{\p(k) \}_{k < t}$. There exists positive numbers
  $\{p_{\sigma}\}_{\sigma \in \Sigma}$ such that for any $w,v \in \Sigma^{+}, \sigma, \sigma^{'} \in \Sigma$, $t \in \mathbb{Z}$:
  \mathleft \begin{align*}
      & E[\uw{w\sigma}(t)\uw{v\sigma^{'}}(t) \mid \mathscr{F}^{\p,-}_{t} ]= \\ 
      &\left\{\begin{array}{rl}
           \!\!  p_{\sigma} \uw{w}(t\!-\!1)\uw{v}(t\!-\!1) & \sigma=\sigma^{'} \text{and} \ w\sigma,v\sigma \in L \\
        0 & \mbox{otherwise} 
        \end{array}\right. \\
     &  E[\uw{w\sigma}(t)\uw{\sigma^{'}}(t) \mid \mathscr{F}^{\p,-}_{t}] = \\
     &\left\{\begin{array}{rl}
      p_{\sigma} \uw{w}(t-1)  & \sigma=\sigma^{'} \mbox{ and } \ w\sigma \in L \\
    0 & \mbox{otherwise}
    \end{array}\right. 
    \end{align*}
    \begin{align*}
    & E[\uw{\sigma}(t)\uw{\sigma^{'}}(t) \mid \mathscr{F}^{\p,-}_{t}] = \begin{array}{rl}
    0 & \mbox{if } \sigma \ne \sigma^{'} 
    \end{array}
  \end{align*}
  
\textbf{3.}
 There exist real numbers $\{\alpha_{\sigma}\}_{\sigma \in \Sigma}$ such that 
 $\sum_{\sigma \in \Sigma} \alpha_{\sigma} \uw{\sigma}(t)=1$ for all $t \in \mathbb{Z}$. 
 
\textbf{4.}
  For each $w,v \in \Sigma^{+}$, the process $\begin{bmatrix} \uw{w}, \uw{v} \end{bmatrix}^T$ is wide-sense stationary.
\end{Definition}

Below we recall from \cite{PetreczkyBilinear} a number of examples of admissible processes.
 \begin{Example}[White noise]
\label{i.i.d. input}
 If $\p = [\p_1, \p_{2}, \ldots, \p_{\pdim}]^{T}$ is 
 an i.i.d. process such that $\p_1=1$,
 for all $i,j=2,\ldots, \pdim$, $t \in \mathbb{Z}$, $\p_i(t), \p_j(t)$ are independent and
$\p_i(t)$ is zero mean, then $\p$ is admissible with $\mathcal{E}=\Sigma \times \Sigma$ and $p_{\sigma}=E[\p_{\sigma}^2(t)]$. 
 \end{Example}
 \begin{Example}[Discrete valued i.i.d process]
 \label{disc:input}
  Let $\btheta$ be an i.i.d process 
  with values in $\Sigma=\{1,\ldots,\pdim\}$.  Let $\p_{\sigma}(t)=\chi(\btheta(t)=\sigma)$ for all $\sigma \in \Sigma$, $t \in \mathbb{Z}$, where $\chi$ is the indicator function. Let $\mathcal{E}=\Sigma \times \Sigma$, and  
$p_{\sigma}=P(\btheta(t)=\sigma)$, $\alpha_{\sigma}=1$ for all $\sigma \in \Sigma$. 
  Then 
	  $\p$ is admissible.
 \end{Example}
  \begin{Example}[Markov chain]
 \label{markov:process}
  Assume that $\btheta$  is a stationary and ergodic Markov process whose state space is the finite set $\Theta$. Assume
$P(\btheta(t)=q_2 \mid \btheta(t-1)=q_1) = p_{(q_2,q_1)} > 0$, $q_1,q_2 \in \Theta$. 
 Let us take $\Sigma=\Theta \times \Theta$, $\p_{(q_2,q_1)}(t)=\chi(\btheta(t+1)=q_2, \btheta(t)=q_1)$ for all $q_1,q_2 \in \Theta, t \in \mathbb{Z}$, and
 let 
 $\mathcal{E}=\{ (\sigma_1,\sigma_2) \in \Sigma \times \Sigma \mid \sigma_1=(q_2,q_1),\sigma_2=(q_3,q_2), q_1,q_2,q_3 \in \Theta \}$.
	  Define $\alpha_{\sigma}=1$ for all $\sigma \in \Sigma$.  Let us identify $\Sigma$ with the set $\{1,\ldots, \pdim\}$,
	  where $\pdim$ is the square of cardinality of $\Theta$.
	  Then $\p$ is admissible.
 \end{Example}
 \begin{Assumption}
  The switching process $\p$
 is admissible. 
 \end{Assumption}
 This assumption imposes restrictions on
 the data used for system identification, but not necessarily for the model class which will be identified.
 It can be thought of as a persistence of excitation
 condition. In particular, 
 binary and white noises, which are the simplest persistently exciting signals, satisfy our assumption.  
We believe that for developing realization theory for general switching signals,
these  simple cases must be understood first. Moreover, admissible switching signals  cover the fairly general case of Markov chains. 
 \begin{Remark}[LPV systems]
  For LPV systems, our assumption
  implies that the scheduling signal 
  used for 
  identification is sampled from a stochastic process. In addition to this being a persistence of excitation condition, we argue
   that in  certain cases this 
   assumption is justified
   \cite{pigalpv}: 
   in the presence of measurement errors, or when the scheduling is externally generated,
  or it is a function of the stochastic states/inputs.
\end{Remark}
 

Next, we recall the concept of \emph{ZMWSII process w.r.t $\p$} from \cite{PetreczkyBilinear}.
To this end, let $\{p_{\sigma}\}_{\sigma \in \Sigma}$  be the constants from Definition \ref{asm:A1}. 
%
For any $w=\sigma_1\cdots \sigma_k \in \Sigma^{+}$,
$\sigma_1,\ldots, \sigma_k \in \Sigma$, 
for a process $\mathbf{r}$, 
define the product $p_w$ and the  process $\zwr$
\begin{equation}\label{eqn:zwu}
\begin{aligned}
    p_w&=p_{\sigma_1}p_{\sigma_2} \cdots p_{\sigma_k}, \\
	\zwr(t) &= \mathbf{r}(t-|w|) \uw{w}(t-1)\frac{1}{\sqrt{p_{w}}},
\end{aligned}
\end{equation}
where $\uw{w}$ is as in \eqref{eqn:uw}.
The process $\zwr$ in \eqref{eqn:zwu} is interpreted as the product of the \emph{past} of $\r$ and $\p$.
\begin{Definition}[ZMWSSI, \cite{PetreczkyBilinear}]\label{def:ZMWSSI}
	A process $\r$ is \emph{Zero Mean Wide Sense Stationary w.r.t. $\p$} (ZMWSSI) if 

	\textbf{(1)} For $\timeset$, the $\sigma$-algebras generated by the variables $\{\mathbf{r}(k) \}_{k \leq t}$, $\{\msig(k) \}_{k < t, \sigSet}$ and $\{\msig(k) \}_{k \geq t, \sigSet},$ denoted by $\filtr$, $\filt$ and $\filtp$ respectively, are such that $\filtr$ and $\filtp$ are conditionally independent w.r.t. $\filt$.

	 \textbf{(2)}
			The processes $\{\mathbf{r}, \{\zwr \}_{\wordSet{+}} \}$  are zero mean, square integrable and are jointly wide sense stationary.
\end{Definition} 

Intuitively, ZMWSII is an extension of wide-sense stationarity, where 
$\Sigma^{+}$ is viewed as time axis:
ZMWSII implies 
the covariances $\expect{\zwr(t)(\zvr(t))^{T}}$ do not depend on $t$, and they depend on the difference between $v$ and $w$.
Next, we recall the definition of a square integrable process w.r.t. $\p$. 
\begin{Definition}[SII process, \cite{PetreczkyBilinear}]
	A process $\r$ is \emph{Square Integrable w.r.t. {$\p$} (SII)}, if for all $\wordSet{*}$, $\timeset$,
	the random variable $\mathbf{r}(t+|w|) \p_{w}(t+|w|-1)$
	is square integrable.
\end{Definition}
As it was mentioned in \cite[Section III, Remark 2]{PetreczkyBilinear}, if $\p$ is essentially bounded, then any ZMWSII process is SII.

\textbf{Assumptions, inputs and outputs and on GLSSs:}
First, we define the notion of white noise processes w.r.t. $\p$, which will be used for stating our assumptions on GLSSs.
\begin{Definition}[White noise w.r.t. $\p$]
	A ZMWSII process $\mathbf{r}$ is a white noise w.r.t. $\p$, if for all $w \in \Sigma^{+}$, $v \in \Sigma^{*}$, 
     $\sigma \in \Sigma$,
	\[ E[\zwr(t)(\zsvr(t))^T] \!\!=\!\! \left\{\begin{array}{ll}
        0 & \mbox {if } w \ne \sigma v \\
        E[\z^{\r}_{\sigma}(t)(\z^{\r}_{\sigma}(t))^T]  & \mbox{if } w=\sigma v 
    \end{array}\right., 
     \]
      and $E[\z^{\r}_{\sigma}(t)(\z^{\r}_{\sigma}(t))^T]$ is
      nonsingular for all $\sigma \in \Sigma$.
\end{Definition}
Intuitively,  if $\r$ is a white noise process w.r.t. $\p$, then
$\{\z_{w}^{\r}(t)\}_{w \in \Sigma^{+}}$ is a sequence of
uncorrelated random variables. Due to \textbf{3.} of Definition \ref{asm:A1}, the product $\r(t-k)\r(t)$
is a linear combination of $\{\z_{w}^{\r}(t)\}_{w \in \Sigma^{+}}$, hence a white noise process w.r.t. $\p$ is also
a white noise process in the classical sense. 
Conversely, if $\mathbf{r}$ is a white noise and it is independent of
$\{\p(s)\}_{s \in \mathbb{Z}}$, then it is
a white noise process w.r.t. $\p$. 

\begin{Assumption}[Inputs and outputs]
	\label{asm:main}\textbf{(1)}
	$\mathbf{u}$ is a white noise  w.r.t. $\p$, 
	\textbf{(2)} the process
	$\begin{bmatrix} \mathbf{y}^T, \!\! & \!\! \mathbf{u}^T \end{bmatrix}^T$ is a ZMWSSI. 

\end{Assumption}

The assumption that $\bu$ is white noise was made for the sake of simplicity, we conjecture that the results of the paper can be extended to more general inputs, e.g., inputs generated by autoregressive models driven by white noise.
Next,  we define the class of systems considered in this paper.
\begin{Definition}[Stationary GLSS]
\label{defn:LPV_SSA_wo_u}
\label{def:Stationary}
  A  \emph{stationary GLSS} (abbreviated 
  sGLSS) of $(\y,\bu,\p)$ is a system \eqref{eqn:LPV_SSA}, such that
\begin{itemize}

	\item[\textbf{1.}]
		$\mathbf{w}=\begin{bmatrix} \vb^T, \!\! & \!\! \bu^T \end{bmatrix}^T$ is a white noise process w.r.t. $\p$. 

    \item[\textbf{2.}]
  The process
  $\begin{bmatrix} \xb^T\!\!, & \!\! \mathbf{w}^T \end{bmatrix}^T$  is a ZMWSSI, and 
     $    
        E[\z^{\xb}_{\sigma}(t)(\z^{\mathbf{w}}_{\sigma}(t))^T]=0, ~
        E[\xb(t)(\z^{\mathbf{w}}_w(t))^T]=0,
    $
  for all $\sigSet$, $w \in \Sigma^{+}$.

	
	\item[\textbf{3.}]
		The eigenvalues of the matrix $\sum_{\sigSet} \psig {A}_{\sigma} \otimes {A}_{\sigma}$ are inside the open unit circle.

    \item[\textbf{4.}]  For all $\sigma_1,\sigma_2 \in \Sigma$, if $(\sigma_1,\sigma_2) \notin \mathcal{E}$, then
  $A_{\sigma_2}A_{\sigma_1}=0$ and $A_{\sigma_2}\begin{bmatrix} B_{\sigma_1} & K_{\sigma_1} \end{bmatrix}  E[\z^{\mathbf{w}}_{\sigma_1}(t) (\z_{\sigma_1}^{\mathbf{w}}(t))^T]=0$.

       \end{itemize}
If $B_i=0$, $i \in \Sigma$, and $D=0$  the we call \eqref{eqn:LPV_SSA} an \emph{autonomous stationary} GLSS (asGLSS) of $(\y,\p)$.
\end{Definition}
Intuitively, sGLSSs are introduced in order to ensure that all the relevant stochastic processes are stationary in an suitable sense. The latter assumption is  widespread in stochastic realization theory and system identification.  

In the terminology of \cite{PetreczkyBilinear}, a sGLSS (resp. asGLSS)  corresponds to a stationary \emph{Generalized Bilinear System} (\GBS)\  
with noise $\begin{bmatrix} \vb^T & \bu^T \end{bmatrix}^T$ (resp. $\vb$).
From \cite{PetreczkyBilinear} it follows that the state and output process $\xb$ and $\yb$ are ZMWSII, and hence
Assumption \ref{asm:main} is satisfied for all $(\bu,\p,\y)$ generated by sGLSS.
Moreover, 
from \cite[Lemma 2]{PetreczkyBilinear} it follows that
\begin{equation*}
	\label{stat:state:eq1}
	\xb(t)= \sum_{\substack{\sigma \in \Sigma, w \in \Sigma^{*},  \\ \sigma w \in L} } \sqrt{p_{\sigma w}} A_w\Bigg (K_{\sigma}\z^{\vb}_{\sigma w}(t) + B_{\sigma} \z^{\bu}_{\sigma w}(t) \Bigg)
\end{equation*}
where the infinite sum converges in the mean square sense.
Hence, the state $\xb$ is uniquely determined by the system matrices and the input $\ub$ and noise $\vb$, and it is the limit of
any state trajectory started from some initial state. 
\begin{Notation}
      We identify the sGLSS $\mathbf{S}$ of the form \eqref{eqn:LPV_SSA} with the tuple 
	$\mathbf{S} =(\{A_{\sigma},K_{\sigma},B_{\sigma}\}_{\sigma=1}^{\pdim},C,D,F,\vb)$, and if
    $\mathbf{S}$ is a asGLSS,
    i.e. $B_{\sigma}=0$, $\sigSet$, $D=0$, 
    then we will identify it with the tuple
	$\mathbf{S} =(\{A_{\sigma},K_{\sigma}\}_{\sigma=1}^{\pdim},C,F,\vb)$.
\end{Notation}

\section{Main result}
\label{sect:decomp}
We start by recalling from \cite{PetreczkyBilinear} the following terminology.
\begin{Notation}[Orthogonal projection $E_l$]
	\label{hilbert:notation}
	Recall from \cite{Bilingsley} that
    the set of real valued square integrable
	random variables, denoted by $\mathcal{H}_1$, 
    forms a Hilbert-space with the scalar product defined as $\langle \mathbf{z}_1,\mathbf{z}_2\rangle=E[\mathbf{z}_1\mathbf{z}_2]$. 
	Let $\mathbf{z}$ be a square integrable 
	random variable taking its values in $\mathbb{R}^k$.  Let $M$ be a closed subspace   of $\mathcal{H}_1$. 
	The  orthogonal projection of $\mathbf{z}$ onto $M$, \emph{denoted by $E_l[\mathbf{z} \mid M]$},
	is defined as the 
     random variable $\mathbf{z}^{*}=\begin{bmatrix} \mathbf{z}_1^{*},\ldots,\mathbf{z}_k^{*} \end{bmatrix}^T$ such that $\mathbf{z}_i^{*} \in M$ is the orthogonal projection of the $i$th coordinate $\mathbf{z}_i$ of $\mathbf{z}$, viewed as an element of
     $\mathcal{H}_1$ onto $M$. 
	If $\mathfrak{S}$ is a subset of square integrable random variables in $\mathbb{R}^p$,  and 
	   $M$ is generated by the coordinates of the elements of $\mathfrak{S}$,
    then instead of $E_l[z \mid M]$ we use \( E_{l}[\mathbf{z} \mid \mathfrak{S}] \). 
\end{Notation}
Intuitively, 
$E_l[\mathbf{z} \mid \mathfrak{S}]$ is \emph{the best 
(minimal variance) linear prediction} of $\mathbf{z}$ based on the elements of $\mathfrak{S}$.

Using the notation above, let us define the \emph{deterministic component} $\yb^d$ of $\yb$ as follows
	\begin{equation}
		\label{decomp:outp:eq1}
		{\yb}^d(t)=E_l[\yb(t) \mid \{\z_w^{\ub}(t)\}_{w \in \Sigma^{+}} \cup \{\ub(t)\}]. 
	\end{equation}
	Also, define the \emph{stochastic component} of $\yb$ as 
	\begin{equation}
		\label{decomp:outp:eq2}
		\yb^s(t)=\yb(t)-\yb^d(t).
	\end{equation}
Intuitively, $\y^d$ represent the best  prediction of $\y$ which is
linear in the present and past  values of $\bu$ and non-linear in the past values of $\p$.
%
In fact, 
 $\y^d$ is the output of
the asGLSS obtained from 
\eqref{eqn:LPV_SSA} by considering $\vb=0$ and
viewing $\bu$ as noise, and 
$\y^s$ is the output of
the asGLSS obtained from
\eqref{eqn:LPV_SSA} by taking $\bu=0$
and viewing $\vb$ as noise.
\begin{Theorem} 
	\label{decomp:lemma}
	  For a  sGLSS 
     of the form \eqref{eqn:LPV_SSA}, 
	$\mathbf{S}_d=(\{A_{\sigma},B_{\sigma}\}_{\sigma=1}^{\pdim},C,D,\bu)$ is an asGLSS of $(\y^d,\p)$ and 
	$\mathbf{S}_s=(\{A_{\sigma},K_{\sigma}\}_{\sigma=1}^{\pdim},C,F,\vb)$ is an asGLSS of $(\y^s,\p)$.
\end{Theorem}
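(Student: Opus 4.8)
The plan is to read off the state processes of $\mathbf{S}_d$ and $\mathbf{S}_s$ directly from the explicit mean-square convergent series for $\xb$ recalled above from \cite[Lemma 2]{PetreczkyBilinear}, by separating the contribution of the input $\ub$ from that of the noise $\vb$. Set
\[
  \xb^{d}(t)=\sum_{\substack{\sigma\in\Sigma,\ w\in\Sigma^{*}\\ \sigma w\in L}}\!\!\sqrt{p_{\sigma w}}\,A_{w}B_{\sigma}\,\z^{\ub}_{\sigma w}(t),
  \qquad
  \xb^{s}(t)=\sum_{\substack{\sigma\in\Sigma,\ w\in\Sigma^{*}\\ \sigma w\in L}}\!\!\sqrt{p_{\sigma w}}\,A_{w}K_{\sigma}\,\z^{\vb}_{\sigma w}(t),
\]
each series converging in the mean-square sense by the eigenvalue condition \textbf{3.}\ of Definition~\ref{def:Stationary}; then $\xb(t)=\xb^{d}(t)+\xb^{s}(t)$.

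I would first verify that $\xb^{d}$ is the state process of $\mathbf{S}_d$ and $\xb^{s}$ that of $\mathbf{S}_s$. Applying \cite[Lemma 2]{PetreczkyBilinear} to $\mathbf{S}_d$ (which is legitimate since $\mathbf{S}_d$ has the same $A_{\sigma}$, hence inherits condition \textbf{3.}) identifies its state process with exactly the series displayed for $\xb^{d}$, so $\xb^{d}(t+1)=\sum_{i}\bigl(A_{i}\xb^{d}(t)+B_{i}\ub(t)\bigr)\p_{i}(t)$, and likewise for $\xb^{s}$. It then remains to check conditions \textbf{1.}--\textbf{4.}\ of Definition~\ref{def:Stationary} for both sub-systems. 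Conditions \textbf{3.}\ and \textbf{4.}\ transfer immediately: they involve only the (unchanged) matrices $A_{\sigma}$, and in \textbf{4.}\ the original relation $A_{\sigma_2}\begin{bmatrix}B_{\sigma_1}&K_{\sigma_1}\end{bmatrix}E[\z^{\mathbf{w}}_{\sigma_1}(t)(\z^{\mathbf{w}}_{\sigma_1}(t))^{T}]=0$ together with the nonsingularity of $E[\z^{\mathbf{w}}_{\sigma_1}(t)(\z^{\mathbf{w}}_{\sigma_1}(t))^{T}]$ already forces $A_{\sigma_2}B_{\sigma_1}=0$ and $A_{\sigma_2}K_{\sigma_1}=0$. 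Condition \textbf{1.}\ holds because $\ub$ and $\vb$, being subvectors of the white-noise-w.r.t.-$\p$ process $\mathbf{w}=\begin{bmatrix}\vb^{T}&\ub^{T}\end{bmatrix}^{T}$, are themselves white noises w.r.t.\ $\p$. For condition \textbf{2.}, joint wide-sense stationarity of $\{\xb^{d},\ub\}$ (resp.\ $\{\xb^{s},\vb\}$) is inherited from that of the family $\{\xb,\mathbf{w},\{\z^{\r}_{w}\}_{w}\}$, and the state/noise orthogonalities are read off the series, since every term of $\z^{\xb^{d}}_{\sigma}(t)$ involves $\ub$ at times $\le t-2$ and is therefore uncorrelated with $\z^{\ub}_{\sigma}(t)$ by whiteness of $\ub$ w.r.t.\ $\p$.

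Next I would identify the output components. Writing $\yb(t)=\bigl(C\xb^{d}(t)+D\ub(t)\bigr)+\bigl(C\xb^{s}(t)+F\vb(t)\bigr)$, the first bracket lies in the linear span of $\{\z^{\ub}_{w}(t)\}_{w\in\Sigma^{+}}\cup\{\ub(t)\}$ because $\ub(t)=\z^{\ub}_{\epsilon}(t)$ and $\xb^{d}(t)$ is, by construction, a linear combination of the $\z^{\ub}_{w}(t)$. The second bracket is a linear combination of $\vb(t)=\z^{\vb}_{\epsilon}(t)$ and of the $\z^{\vb}_{w}(t)$, $w\in\Sigma^{+}$, and each such variable is orthogonal to every $\z^{\ub}_{w'}(t)$, $w'\in\Sigma^{*}$: for $w\ne w'$ this is the whiteness of $\mathbf{w}$ w.r.t.\ $\p$, while for $w=w'$ it is the uncorrelatedness of the input and the noise, i.e.\ $E[\z^{\vb}_{\sigma}(t)(\z^{\ub}_{\sigma}(t))^{T}]=0$ and $E[\vb(t)\ub(t)^{T}]=0$, which we take as a standing hypothesis on sGLSS (as is customary, it rules out feedback from the noise into the input). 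Hence $C\xb^{s}(t)+F\vb(t)\perp\{\z^{\ub}_{w}(t)\}_{w\in\Sigma^{+}}\cup\{\ub(t)\}$, and by uniqueness of the orthogonal projection $\yb^{d}(t)=C\xb^{d}(t)+D\ub(t)$ and $\yb^{s}(t)=C\xb^{s}(t)+F\vb(t)$. Combined with the previous step, this shows $\mathbf{S}_d$ is an asGLSS of $(\yb^{d},\p)$ and $\mathbf{S}_s$ an asGLSS of $(\yb^{s},\p)$.

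The step I expect to be the main obstacle is condition \textbf{2.}\ of Definition~\ref{def:Stationary} for the two sub-systems: one must deduce, from the conditional-independence part of the ZMWSSI hypothesis together with the combinatorial admissibility properties of $\p$, both the joint wide-sense stationarity of the shifted/truncated products and the exact orthogonality of the states with the incoming noise; this does not reduce to a one-line inheritance argument and follows the pattern of the more technical lemmas of \cite{PetreczkyBilinear}. A second, conceptually important point is that the decomposition must be consistent with the \emph{original} matrices $B_{\sigma}$ and $K_{\sigma}$ rather than with modified ones: this is exactly what the uncorrelatedness of $\ub$ and $\vb$ secures, since without it the projection defining $\yb^{d}$ would blend the two components and the deterministic part would be realized by an input matrix of the form $B_{\sigma}+K_{\sigma}G_{\sigma}$.
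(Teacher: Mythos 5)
Your argument has the same skeleton as the paper's proof: split the mean-square convergent state series into the $\ub$-driven part $\xb^d$ and the $\vb$-driven part $\xb^s$, invoke the state-series lemma of \cite{PetreczkyBilinear} to recognize these as the (unique) state processes of $\mathbf{S}_d$ and $\mathbf{S}_s$, and identify $\yb^d=C\xb^d+D\ub$, $\yb^s=C\xb^s+F\vb$ by orthogonality of the noise-driven terms to the space spanned by $\{\z^{\ub}_w(t)\}_{w\in\Sigma^{+}}\cup\{\ub(t)\}$ and uniqueness of the orthogonal projection. The acknowledged soft spot about Definition~\ref{def:Stationary}.\textbf{2} for the subsystems is handled in the paper exactly the way you gesture at, namely by delegating to \cite[Lemma 3]{PetreczkyBilinear}, so that part is acceptable.

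There is, however, one genuine gap: you take the input--noise uncorrelatedness, i.e.\ $E[\vb(t)\ub(t)^T]=0$ and $E[\z^{\vb}_{\sigma}(t)(\z^{\ub}_{\sigma}(t))^T]=\tfrac{1}{p_\sigma}E[\vb(t-1)\ub(t-1)^T\p_{\sigma}^2(t-1)]=0$, as a ``standing hypothesis''. The theorem is stated for an arbitrary sGLSS in the sense of Definition~\ref{def:Stationary}, which contains no such extra assumption, so as written you prove a weaker statement; and this is not a cosmetic point, since (as you yourself note) without these identities the projection defining $\yb^d$ would mix the two components and $\mathbf{S}_d$ would not have input matrices $B_\sigma$. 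The cases $w\neq v$ (and $\z^{\vb}_w(t)$ versus $\ub(t)$) do follow from the joint whiteness of $\mathbf{w}=\begin{bmatrix}\vb^T & \ub^T\end{bmatrix}^T$ w.r.t.\ $\p$, as you say; the problematic cases are precisely the equal-word and equal-time blocks, which whiteness as defined does not force to vanish. The paper closes exactly this point in Lemma~\ref{decomp:lemma:pf1}: it derives the weighted cross-moments $E[\vb(t)\ub(t)^T\p_i(t)\p_j(t)]=0$ (for all $i,j$) from the ZMWSII/white-noise structure of $\mathbf{w}$ via \cite[Lemma 7]{PetreczkyBilinear} and Definition~\ref{def:Stationary}, and then obtains $E[\vb(t)\ub(t)^T]=\sum_{i,j}\alpha_i\alpha_j E[\vb(t)\ub(t)^T\p_i(t)\p_j(t)]=0$ using the admissibility relation $\sum_{\sigma}\alpha_\sigma\p_\sigma(t)=1$. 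To make your proof match the stated theorem, replace the standing hypothesis by this derivation (or an equivalent one) from condition \textbf{1.}\ of Definition~\ref{def:Stationary}; the rest of your argument then goes through essentially as in the paper.
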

The proof of Theorem \ref{decomp:lemma} is presented in the Appendix \ref{appendix1}.


In fact, the converse of Theorem \ref{decomp:lemma} also holds.  To this end, recall from \cite{PetreczkyBilinear} the definition of   \emph{innovation process} of $\yb^s$: 
	\begin{equation} 
		\label{decomp:lemma:innov}
		\eb^s(t)=\yb^s(t)-E_l[\yb^s(t) \mid\{\z^{\yb^s}_w (t)\}_{w \in \Sigma^{+}}]
	\end{equation}
 Intuitively, $\eb^s(t)$ is the difference between $\y(t)$ and the best linear prediction of $\y^s(t)$ based on its own past multiplied with past values of the switching process.
\begin{Theorem}
	\label{decomp:lemma:inv}
Assume that there exists a sGLSS of $(\yb,\ub,\p)$ and that the following holds:
\begin{enumerate}
\item  $\hat{\mathbf{S}}_d=(\{\hat{A}_i, \hat{B}_{i} \}_{i=1}^{\pdim}, \hat{C}, \hat{D},\ub)$ is an asGLSS of $(\yb^d,\p)$. 
\item  $\hat{\mathbf{S}}_s=(\{\hat{A}_{i}, \hat{K}_{i} \}_{i=1}^{\pdim}, \hat{C}, I_{\ny},\vb)$
is an asGLSS  of $(\yb^s,\p)$ in \emph{innovation form}, i.e. $\vb=\eb^s$. 
\end{enumerate}
Then $\hat{\mathbf{S}}=(\{\hat{A}_{i},\hat{K}_{i},\hat{B}_{i}\}_{i=1}^{\pdim},\hat{C},\hat{D},I,\eb^s)$ is a sGLSS of $(\yb,\ub,\p)$, and 
 the innovation process $\eb^s$ satisfies
 $\eb^s(t)=\y(t)-\hat{\y}(t)$, where
\begin{equation} 
\label{decomp:lemma:innov2}
\hat{\y}(t)=E_l[\yb(t) \mid \{\z^{\yb}_w (t), \z^{\ub}_w(t) \}_{w \in \Sigma^{+}} \cup \{\ub(t)\}].
\end{equation}
\end{Theorem}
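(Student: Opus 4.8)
The plan is to assemble the candidate representation $\hat{\mathbf{S}}$ directly from the two autonomous pieces, to verify the four requirements of Definition~\ref{def:Stationary}, and then to obtain the predictor formula by an orthogonal-projection argument. Let $\hat{\xb}^{d}$ and $\hat{\xb}^{s}$ be the (unique, mean-square) state processes of $\hat{\mathbf{S}}_{d}$ and $\hat{\mathbf{S}}_{s}$, and set $\hat{\xb}:=\hat{\xb}^{d}+\hat{\xb}^{s}$. Adding the two state recursions and the two output equations (using that the noise of $\hat{\mathbf{S}}_{s}$ is $\eb^{s}$) shows that $\hat{\xb}$, the input $\ub$, the noise $\mathbf{w}:=[(\eb^{s})^{T}\ \ub^{T}]^{T}$ and the output $\hat{C}\hat{\xb}+\hat{D}\ub+\eb^{s}=\yb^{d}+\yb^{s}=\yb$ satisfy the state-space equations of $\hat{\mathbf{S}}$. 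It remains to check items \textbf{1}--\textbf{4} of Definition~\ref{def:Stationary} and the predictor identity.

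The technical core is the claim that $\eb^{s}$ and $\ub$ are uncorrelated in the strong sense that $E[\z^{\eb^{s}}_{w}(t)(\z^{\ub}_{v}(t))^{T}]=0$, $E[\eb^{s}(t)(\z^{\ub}_{v}(t))^{T}]=0$, $E[\z^{\eb^{s}}_{w}(t)\ub(t)^{T}]=0$ and $E[\eb^{s}(t)\ub(t)^{T}]=0$ for all $w,v\in\Sigma^{+}$. I would establish this from four facts: (i) realization theory of input-free GLSS \cite{PetreczkyBilinear} puts $\eb^{s}(t)$ and every $\z^{\eb^{s}}_{w}(t)$ in the closed linear span of $\{\yb^{s}(t)\}\cup\{\zwys(t)\}_{w\in\Sigma^{+}}$; (ii) by Theorem~\ref{decomp:lemma} $\yb^{s}$ is the output of an asGLSS driven only by $\vb$, so $\yb^{s}(s)$ is a mean-square limit of linear combinations of products of $\vb$ at times $\le s$ with $\p$ at times $<s$; (iii) by \eqref{decomp:outp:eq1}--\eqref{decomp:outp:eq2}, $\yb^{s}(s)\perp\{\z^{\ub}_{w}(s)\}_{w\in\Sigma^{+}}\cup\{\ub(s)\}$ for every $s$; (iv) the conditional-independence property of ZMWSII processes (item~\textbf{(1)} of Definition~\ref{def:ZMWSSI}) together with the moment identities of an admissible $\p$ (item~\textbf{2} of Definition~\ref{asm:A1}) let one average the switching factors out of each of the inner products above and reduce it to an inner product of a time shift of $\yb^{s}$ with $\ub$ at one time; this vanishes by (iii) when the $\ub$-term sits in the past of the $\yb^{s}$-term, and by the joint whiteness of $[\vb^{T}\ \ub^{T}]^{T}$ in the given sGLSS otherwise. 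I expect this step to be the main obstacle: since $\vb$ and $\ub$ need not be uncorrelated, one cannot simply say ``$\yb^{s}$ lives in the noise space, which is orthogonal to the input space''---the orthogonality of $\yb^{s}$ to the $\ub$-data must be transported through the switching products by essentially the computation used to prove Theorem~\ref{decomp:lemma}.

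Granting the cross-orthogonality, items \textbf{1}--\textbf{4} follow. Item~\textbf{3} (the spectral-radius condition on $\sum_{\sigSet}\psig\hat{A}_{\sigma}\otimes\hat{A}_{\sigma}$) and the first half of item~\textbf{4} ($\hat{A}_{\sigma_{2}}\hat{A}_{\sigma_{1}}=0$ when $(\sigma_{1},\sigma_{2})\notin\mathcal{E}$) are inherited verbatim from $\hat{\mathbf{S}}_{d}$, the matrices $\hat{A}_{\sigma}$ being the same. For item~\textbf{1}, $\ub$ and $\eb^{s}$ are each white noise w.r.t.\ $\p$ (by Assumption~\ref{asm:main} and by $\hat{\mathbf{S}}_{s}$ being an asGLSS), and the cross-orthogonality makes $E[\z^{\mathbf{w}}_{\sigma}(t)(\z^{\mathbf{w}}_{\sigma}(t))^{T}]$ block-diagonal with nonsingular blocks, hence nonsingular, while all off-diagonal word-lag covariances vanish; so $\mathbf{w}$ is white noise w.r.t.\ $\p$. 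The second half of item~\textbf{4} then reduces, by that block-diagonality, to $\hat{A}_{\sigma_{2}}\hat{K}_{\sigma_{1}}E[\z^{\eb^{s}}_{\sigma_{1}}(\z^{\eb^{s}}_{\sigma_{1}})^{T}]=0$ and $\hat{A}_{\sigma_{2}}\hat{B}_{\sigma_{1}}E[\z^{\ub}_{\sigma_{1}}(\z^{\ub}_{\sigma_{1}})^{T}]=0$, which are item~\textbf{4} for $\hat{\mathbf{S}}_{s}$ and $\hat{\mathbf{S}}_{d}$. For item~\textbf{2}, joint wide-sense stationarity of $[\hat{\xb}^{T}\ \mathbf{w}^{T}]^{T}$ holds because $(\hat{\xb}^{d},\ub,\p)$ and $(\hat{\xb}^{s},\eb^{s},\p)$ are each ZMWSII and all cross-moments are time invariant (everything is a stationary functional of the ZMWSII tuple $(\yb,\ub,\p)$), and the relations $E[\z^{\hat{\xb}}_{\sigma}(\z^{\mathbf{w}}_{\sigma})^{T}]=0$, $E[\hat{\xb}(t)(\z^{\mathbf{w}}_{w}(t))^{T}]=0$ follow from the corresponding relations for $\hat{\mathbf{S}}_{d}$ and $\hat{\mathbf{S}}_{s}$, the cross-orthogonality, and the fact that $\hat{\xb}^{d}$ (resp.\ $\hat{\xb}^{s}$) lies in the closed span of the $\z^{\ub}_{v}$'s (resp.\ $\z^{\eb^{s}}_{v}$'s) by the state formula of \cite[Lemma~2]{PetreczkyBilinear}. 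Hence $\hat{\mathbf{S}}$ is a sGLSS of $(\yb,\ub,\p)$.

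Finally, the predictor identity. Put $\tilde{\yb}(t):=\yb^{d}(t)+E_{l}[\yb^{s}(t)\mid\{\zwys(t)\}_{w\in\Sigma^{+}}]$; by \eqref{decomp:lemma:innov}, $\yb(t)-\tilde{\yb}(t)=\yb^{s}(t)-E_{l}[\yb^{s}(t)\mid\{\zwys(t)\}_{w}]=\eb^{s}(t)$, so it suffices to prove $\tilde{\yb}(t)=E_{l}[\yb(t)\mid M]=\hat{\yb}(t)$, where $M$ is the closed span of $\{\zwy(t),\zwu(t)\}_{w\in\Sigma^{+}}\cup\{\ub(t)\}$. I would check the two defining properties of the projection. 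First, $\tilde{\yb}(t)\in M$: by \eqref{decomp:outp:eq1}, $\yb^{d}(t)\in\overline{\mathrm{span}}(\{\zwu(t)\}_{w}\cup\{\ub(t)\})\subseteq M$; and $\zwys(t)=\zwy(t)-\z^{\yb^{d}}_{w}(t)$ with $\z^{\yb^{d}}_{w}(t)\in M$, because the state formula of \cite[Lemma~2]{PetreczkyBilinear} for $\hat{\mathbf{S}}_{d}$ makes $\yb^{d}(t-|w|)$ a limit of linear combinations of $\z^{\ub}_{v}(t-|w|)$ and $\ub(t-|w|)$, and multiplying by $\uw{w}(t-1)$ sends $\z^{\ub}_{v}(t-|w|)$ to a multiple of $\z^{\ub}_{vw}(t)$ (concatenation of words) and $\ub(t-|w|)$ to a multiple of $\z^{\ub}_{w}(t)$; hence $E_{l}[\yb^{s}(t)\mid\{\zwys(t)\}_{w}]\in M$. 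Second, $\yb(t)-\tilde{\yb}(t)=\eb^{s}(t)\perp M$: $\eb^{s}(t)\perp\zwys(t)$ for every $w$ by \eqref{decomp:lemma:innov}; $\eb^{s}(t)\perp\z^{\yb^{d}}_{w}(t)$ because the latter lies in the span of the $\z^{\ub}_{v}$'s, to which $\eb^{s}(t)$ is orthogonal by the cross-orthogonality, hence $\eb^{s}(t)\perp\zwy(t)$; and $\eb^{s}(t)\perp\zwu(t)$, $\eb^{s}(t)\perp\ub(t)$, again by the cross-orthogonality. Thus $\tilde{\yb}(t)=\hat{\yb}(t)$ and $\eb^{s}(t)=\yb(t)-\hat{\yb}(t)$, completing the proof.
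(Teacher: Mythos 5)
Your overall architecture coincides with the paper's: form $\hat{\xb}=\hat{\xb}^d+\hat{\xb}^s$, verify the conditions of Definition~\ref{def:Stationary} for $\hat{\mathbf{S}}$ with joint noise $\r=\begin{bmatrix}(\eb^s)^T & \ub^T\end{bmatrix}^T$, the crux being that $\r$ is white w.r.t.\ $\p$ with $E[\eb^s(t)\ub^T(t)\p_{\sigma}^2(t)]=0$ (the paper's Lemma~\ref{decomp:lemma:inv:pf2}); and your closing projection argument for $\eb^s(t)=\y(t)-\hat{\y}(t)$ is correct and in fact more explicit than what the paper records. The gap is exactly where you flag ``the main obstacle.'' Your substitute route for the cross-orthogonality --- ``average the switching factors out \dots reduce to an inner product of a time shift of $\y^s$ with $\ub$ at one time, which vanishes by (iii) or by joint whiteness'' --- is never carried out, and as stated it is unclear how it would go: the admissibility identities of Definition~\ref{asm:A1} collapse switching products only after conditioning on the strict past of $\p$, the mixed terms $E[\y^s(s)\ub(r)^T\uw{u}(t-1)\uw{v}(t-1)]$ they leave behind are not literally covered by your fact (iii) (which concerns $\z^{\ub}_w(s)$ and $\ub(s)$ at the single time $s$), and no case analysis is given. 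Moreover, the concern that motivated this detour is misplaced: the ``simple'' argument you reject is precisely the paper's, and it is valid. Because the assumed sGLSS of $(\y,\ub,\p)$ has joint noise $\begin{bmatrix}\vb^T & \ub^T\end{bmatrix}^T$ white w.r.t.\ $\p$, Lemma~\ref{decomp:lemma:pf1} (already proved for Theorem~\ref{decomp:lemma}) shows that the generators $\vb(t)$, $\{\z^{\vb}_w(t)\}_{w\in\Sigma^{+}}$ of $\mathcal{H}_{t,+}^{\vb}$ are orthogonal to $\mathcal{H}_{t,+}^{\ub}$; and \eqref{decomp:lemma:pf4:eq2} together with \cite[Lemma~11]{PetreczkyBilinear} places $\y^s(t)$, $\eb^s(t)$ and all $\z^{\y^s}_v(t)$, $\z^{\eb^s}_v(t)$ inside $\mathcal{H}_{t,+}^{\vb}$ (that citation is also the ``transport through switching products'' device you implicitly use in your last paragraph when moving $\z^{\ub}_v(t-|w|)$ to $\z^{\ub}_{vw}(t)$). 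These two facts give all four of your cross-orthogonality identities at once, which is how the paper's Lemmas~\ref{decomp:lemma:inv:pf2.1}--\ref{decomp:lemma:inv:pf2} proceed.

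A second, smaller omission: item~\textbf{2} of Definition~\ref{def:Stationary} asks that $\begin{bmatrix}\hat{\xb}^T & \r^T\end{bmatrix}^T$ be ZMWSSI, and Definition~\ref{def:ZMWSSI} contains, besides the second-moment conditions you check, the conditional-independence condition \textbf{(1)}. For $\eb^s$ this is not a consequence of stationarity of covariances; the paper proves it by showing $\eb^s(t)$ is measurable w.r.t.\ $\mathcal{F}^{\mathbf{w}}_t \lor \mathcal{F}^{\p,-}_t$ (again via membership in $\mathcal{H}_{t,+}^{\vb}$) and then invoking \cite[Proposition~2.4]{vanputten1985}. Your parenthetical ``everything is a stationary functional of the ZMWSII tuple'' does not address this. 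With the direct $\mathcal{H}_{t,+}^{\vb}$ argument restored and the conditional-independence step added, your proposal becomes a complete proof along the paper's lines, with the predictor identity as a worthwhile supplement.
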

The proof of Theorem \ref{decomp:lemma:inv} is presented in Section ~\ref{app:proof_lem2}. 
\begin{Remark}
\label{rem:diff}
The condition that the $\{A_i\}_{i=1}^{\pdim}$ and $C$ matrices of $\hat{\mathbf{S}}_d$ of 
$\hat{\mathbf{S}}_d$ can be relaxed: 
if $\bar{\mathbf{S}}_d=(\{\hat{A}_i^d, \hat{B}^d_{i} \}_{i=1}^{\pdim}, \hat{C}^d, \hat{D},\ub)$ and 
$\bar{\mathbf{S}}_s=(\{\hat{A}_i^s, \hat{B}^s_{i} \}_{i=1}^{\pdim}, \hat{C}^s, I,\eb^s)$ are
asGLSS of $(\y^s,\p)$ and $(\y^s,\p)$ respectively, then with
\[ \hat{A}_i\!\!=\!\!\begin{bmatrix} \hat{A}_i^d& 0 \\
0 & \hat{A}_i^s\end{bmatrix}, ~
 \hat{B}_i\!\!=\!\!\begin{bmatrix} \hat{B}_i^d \\  0 \end{bmatrix}, ~
  \hat{K}_i\!\!=\!\!\begin{bmatrix} 0 \\ \hat{K}_i^s \end{bmatrix}, ~
	\hat{C}=\begin{bmatrix} (C^d)^T \\ (C^s)^T \end{bmatrix}^T,
 \]
 $\hat{\mathbf{S}}_d$ and $\hat{\mathbf{S}}_s$
	from  Theorem \ref{decomp:lemma:inv}
	  are  asGLSSs of $(\y^d,\p)$ and $(\y^s,\p)$ respectively and Theorem \ref{decomp:lemma:inv} applies.
\end{Remark}
Thus, Theorem \ref{decomp:lemma} -- \ref{decomp:lemma:inv} means that finding sGLSSs of $(\y,\bu,\p)$ is equivalent to
finding an asGLSS representations of the deterministic and stochastic components respectively. 

Theorem \ref{decomp:lemma:inv}  suggests that $\eb^s(t)$ can be viewed as the innovation process of $\y$. Indeed, $\hat{\y}(t)$
from \eqref{decomp:lemma:innov2}
is the best linear prediction of $\y(t)$ based on 
past values of $\y$ and past and current values of $\bu$ 
multiplied by 
past values of the switching process. Then $\eb^s(t)$ is
the prediction error $\y(t)-\hat{\y}(t)$.  
This motivates the following definition.
\begin{Definition}[Innovation form]
The sGLSS \eqref{eqn:LPV_SSA} is
in \emph{innovation form}, if $F$ is the identity matrix and 
$\vb=\eb^s$. 
\end{Definition}
Similarly to the LTI case \cite{LindquistBook}, an sGLSS 
in innovation form can be viewed as a recursive 
filter driven by $\y$, $\bu$, $\p$, whose output is the optimal prediction 
$\hat{\y}(t)$ from \eqref{decomp:lemma:innov2}.
Indeed,  from $\eb^s(t)=\y(t)\!\!-\!\! C\x(t)\!\!-\!\! D\bu(t)$
it follows that $\x(t+1)$ is a function of $\x(t),\bu(t),\y(t)$ and $\p(t)$,  and $\hat{\y}(t)=C\x(t)+D\bu(t)$
\begin{Corollary}[Existence]
\label{col:ex}
 Any sGLSS of $(\y,\bu,\p)$ can be transformed to a sGLSS of $(\y,\bu,\p)$ in innovation form. 
\end{Corollary}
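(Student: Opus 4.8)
The plan is to combine Theorem \ref{decomp:lemma}, which produces the decomposition $\y = \y^d + \y^s$ together with asGLSS representations $\mathbf{S}_d$ and $\mathbf{S}_s$ of the two components, with the realization theory of autonomous GLSSs from \cite{PetreczkyBilinear}, which guarantees that the stochastic component $\y^s$ admits an asGLSS \emph{in innovation form}; then Theorem \ref{decomp:lemma:inv} (in the relaxed form of Remark \ref{rem:diff}) reassembles these two into a sGLSS of $(\y,\bu,\p)$ with $F = I$ and $\vb = \eb^s$, which is exactly an sGLSS in innovation form.

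Concretely, I would proceed as follows. First, start from an arbitrary sGLSS $\mathbf{S} = (\{A_\sigma,K_\sigma,B_\sigma\}_{\sigma=1}^{\pdim},C,D,F,\vb)$ of $(\y,\bu,\p)$, which exists by hypothesis. Apply Theorem \ref{decomp:lemma}: this yields that $\mathbf{S}_d = (\{A_\sigma,B_\sigma\}_{\sigma=1}^{\pdim},C,D,\bu)$ is an asGLSS of $(\y^d,\p)$ and $\mathbf{S}_s = (\{A_\sigma,K_\sigma\}_{\sigma=1}^{\pdim},C,F,\vb)$ is an asGLSS of $(\y^s,\p)$. Since $\y^s$ is the output of an asGLSS, it is ZMWSII, and by the realization theory for autonomous GLSSs (equivalently, stationary GBSs with noise $\vb$) established in \cite{PetreczkyBilinear}, there exists an asGLSS of $(\y^s,\p)$ in innovation form, i.e. a tuple $\hat{\mathbf{S}}_s = (\{\hat{A}^s_i,\hat{K}^s_i\}_{i=1}^{\pdim},\hat{C}^s,I_{\ny},\eb^s)$ whose output is $\y^s$ and whose noise is the innovation process $\eb^s$ defined in \eqref{decomp:lemma:innov}. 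Keep $\bar{\mathbf{S}}_d := \mathbf{S}_d$ as the (already available) asGLSS of $(\y^d,\p)$, writing its matrices as $\{\hat{A}^d_i,\hat{B}^d_i\}_{i=1}^{\pdim},\hat{C}^d,\hat{D}$.

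Next, invoke Remark \ref{rem:diff}: form the block-diagonal system $\hat{\mathbf{S}}$ with matrices $\hat{A}_i = \mathrm{diag}(\hat{A}^d_i,\hat{A}^s_i)$, $\hat{B}_i = [(\hat{B}^d_i)^T\ 0]^T$, $\hat{K}_i = [0\ (\hat{K}^s_i)^T]^T$, $\hat{C} = [\hat{C}^d\ \hat{C}^s]$, $\hat{D}$ as above, $\hat{F} = I_{\ny}$, and noise $\eb^s$. By the remark, the induced $\hat{\mathbf{S}}_d$ and $\hat{\mathbf{S}}_s$ are asGLSSs of $(\y^d,\p)$ and $(\y^s,\p)$ respectively, so the hypotheses of Theorem \ref{decomp:lemma:inv} are met; hence $\hat{\mathbf{S}} = (\{\hat{A}_i,\hat{K}_i,\hat{B}_i\}_{i=1}^{\pdim},\hat{C},\hat{D},I,\eb^s)$ is a sGLSS of $(\y,\bu,\p)$. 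Since its $F$-matrix is the identity and its noise is $\eb^s$, it is by definition in innovation form. Finally, I would note that the map $\mathbf{S} \mapsto \hat{\mathbf{S}}$ is exactly the asserted "transformation," so the statement follows.

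The main obstacle is not in the assembly — that is essentially bookkeeping given the two theorems — but in making sure the appeal to \cite{PetreczkyBilinear} for the existence of an innovation-form asGLSS of $\y^s$ is legitimate: one must check that $\y^s$ (equivalently the pair $(\y^s,\p)$) satisfies all the standing hypotheses required there, in particular that $\y^s$ is ZMWSII with respect to the \emph{admissible} process $\p$ and that the associated covariance/Hankel-type structure has the rank-finiteness needed for a realization to exist. This reduces to verifying that the asGLSS $\mathbf{S}_s$ from Theorem \ref{decomp:lemma} genuinely meets Definition \ref{def:Stationary}, which is inherited from $\mathbf{S}$, and then quoting the existence half of the realization result of \cite{PetreczkyBilinear} verbatim; no minimality is claimed here, so only existence is needed.
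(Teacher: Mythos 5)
Your proposal is correct and follows essentially the same route as the paper: decompose via Theorem \ref{decomp:lemma}, invoke the realization theory of \cite{PetreczkyBilinear} (its Theorem 2) to obtain an innovation-form asGLSS of $(\y^s,\p)$, and reassemble with Remark \ref{rem:diff} and Theorem \ref{decomp:lemma:inv}. The only difference is cosmetic: the paper additionally notes that the innovation-form asGLSS can be computed from $\mathbf{S}_s$ by the algorithm of \cite{PetreczkyBilinear}, whereas you spell out the block-diagonal assembly and the hypothesis check more explicitly.
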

\begin{proof}
	From Theorem \ref{decomp:lemma} it follows that $\mathbf{S}_s$ is an asGLSS of $(\y^s,\p)$ and $\mathbf{S}_d$ is an 
	asGLSS of $(\y^d,\p)$. From \cite[Theorem 2]{PetreczkyBilinear} it follows that there exists a (minimal) asGLSS
	$\bar{\mathbf{S}}_s$ of $(\y^s,\p)$ in innovation form and by \cite[Theorem 3]{PetreczkyBilinear} it can be computed 
	from $\mathbf{S}_s$ using \cite[Algorithm 1]{PetreczkyBilinear}. Then using Remark \ref{rem:diff} and 
	Theorem \ref{decomp:lemma:inv}, it follows that $\hat{\mathbf{S}}$ defined in Theorem \ref{decomp:lemma:inv}
	is a sGLSS of $(\y,\bu,\p)$ in innovation form. 
\end{proof}
We can also provide conditions for minimality of sGLSSs. To this end, for a sGLSS of the form \eqref{eqn:LPV_SSA} we refer to the
dimension $\nx$ of the state-space as \emph{dimension} of sGLSS. 
\begin{Corollary}[Minimality]
\label{col:min}
	Assume that $\mathbf{S}$ is a sGLSS of $(\y,\bu,\p)$, 
	and assume that $\mathbf{S}_s$ from Theorem \ref{decomp:lemma} is observable and reachable in the terminology of \cite{PetreczkyBilinear}, if viewed as a stationary \GBS.  
	Then it is minimal dimensional among all the sGLSSs of $(\y,\bu,\p)$. 
\end{Corollary}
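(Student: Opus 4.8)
The plan is to reduce minimality of $\mathbf{S}$ as a sGLSS of $(\y,\bu,\p)$ to minimality of $\mathbf{S}_s$ as a realization of $(\y^s,\p)$, the latter being already available from the realization theory of stationary \GBS\ in \cite{PetreczkyBilinear}. First I would fix an arbitrary sGLSS $\mathbf{S}'$ of $(\y,\bu,\p)$ and denote by $n'$ its state dimension; the goal is to prove $\nx \le n'$, which together with the fact that $\mathbf{S}$ itself is a sGLSS of $(\y,\bu,\p)$ of dimension $\nx$ yields the claim. Applying Theorem \ref{decomp:lemma} to $\mathbf{S}'$ gives that its stochastic part $\mathbf{S}'_s$ — assembled from the $A'_\sigma$, $K'_\sigma$, $C'$, $F'$ matrices of $\mathbf{S}'$ together with its noise $\vb'$ — is an asGLSS of $(\y^s,\p)$. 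Here one uses that $\y^d$ and $\y^s$ in \eqref{decomp:outp:eq1}--\eqref{decomp:outp:eq2} are defined solely from $(\y,\bu,\p)$ and hence do not depend on the chosen realization, so $\mathbf{S}'_s$ realizes the \emph{same} process $(\y^s,\p)$ as $\mathbf{S}_s$. Crucially, $\mathbf{S}'_s$ has the same state dimension $n'$ as $\mathbf{S}'$, since by construction it shares its state space; likewise $\mathbf{S}_s$ has state dimension $\nx$.

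Next I would invoke the hypothesis that $\mathbf{S}_s$, viewed as a stationary \GBS\ with noise $\vb$, is observable and reachable in the terminology of \cite{PetreczkyBilinear}. By the Kalman-style realization theory of stationary \GBS\ developed there — namely that an observable and reachable \GBS\ realization has minimal state dimension among all \GBS\ realizations of the same output process (the same result underlying the existence statement used in the proof of Corollary \ref{col:ex}) — it follows that $\mathbf{S}_s$ has the smallest state dimension among all asGLSS realizations of $(\y^s,\p)$. In particular $\nx = \dim \mathbf{S}_s \le \dim \mathbf{S}'_s = n'$, which is exactly what was needed.

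The only point requiring care is the identification between the notion "asGLSS of $(\y^s,\p)$" from Definition \ref{def:Stationary} and the notion "stationary \GBS\ realizing $(\y^s,\p)$ with noise $\vb'$" from \cite{PetreczkyBilinear}, so that the minimality characterization (and the very notions of observability and reachability applied to $\mathbf{S}_s$) transfer verbatim; this correspondence is exactly the one recorded in the remark following Definition \ref{def:Stationary}, so no genuine obstacle arises. I do not expect any hard step here: the argument is a short chain of one application of Theorem \ref{decomp:lemma}, one citation of the \GBS\ minimality theorem, and one inequality. It is worth noting that the deterministic part $\mathbf{S}_d$ plays no role whatsoever, which is consistent with the intuition that the "noise geometry" of $(\y,\bu,\p)$, and hence the minimal achievable state dimension, is governed entirely by the stochastic component $\y^s$.
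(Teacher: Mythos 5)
Your proposal is correct and follows essentially the same route as the paper: apply Theorem \ref{decomp:lemma} to a competing sGLSS of $(\y,\bu,\p)$ to obtain an asGLSS of $(\y^s,\p)$ of the same dimension, then invoke the minimality of the observable and reachable $\mathbf{S}_s$ from the realization theory of stationary \GBS\ in \cite{PetreczkyBilinear} (the paper argues by contradiction, you argue directly, which is immaterial). Your explicit remark that $\y^s$ is defined from $(\y,\bu,\p)$ alone, so that both stochastic components realize the same process, is a detail the paper leaves implicit but which your write-up correctly supplies.
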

\begin{proof}
Let  $\hat{\mathbf{S}}$ be a sGLSS of $(\y,\bu,\p)$ of smaller dimension than $\mathbf{S}$. Then by Theorem \ref{decomp:lemma},
	$\hat{\mathbf{S}}_s$ is an asGLSS of $(\y,\p)$ of the same dimension as $\hat{\mathbf{S}}$. However, from  \cite[Theorem 2]{PetreczkyBilinear},
	$\mathbf{S}_s$ is a minimal dimensional asGLSS of $(\y,\p)$ and it is of the same dimension as $\mathbf{S}$, i.e. of dimension larger than $\hat{\mathbf{S}}_s$,
	which a contradiction. 
\end{proof}
Note that observability and reachability in the sense of \cite{PetreczkyBilinear} can be characterized by rank conditions of suitable
matrices, which can be constructed from the matrices of $\mathbf{S}_s$.  
We also get the following sufficient condition for isomorphism of minimal sGLSSs in innovation form. 
\begin{Corollary}[Isomorphism]
\label{col:min:is}
	Assume $\mathbf{S}$ is of the form \eqref{eqn:LPV_SSA} and $\hat{\mathbf{S}}=(\{\hat{A}_{\sigma},\hat{B}_{\sigma},\hat{K}_{\sigma}\}_{\sigma \in \Sigma},\hat{C},\hat{D},I,\eb^s)$ 
	and they are both sGLSS of $(\y,\bu,\p)$ in innovation form and $\mathbf{S}_s$ and $\hat{\mathbf{S}}_s$ are both reachable and observable 
	as stationary \GBS\ in the terminology of \cite{PetreczkyBilinear}. Assume that
	the covariance matrix $E[\eb^s(t)(\eb^s(t))^T\p_{\sigma}^2(t)]$ is nonsingular 
	and  $\mathrm{Im}[B_{\sigma}^T,\hat{B}_{\sigma}^T]^T \subseteq \mathrm{Im}[K_{\sigma}^T,\hat{K}_{\sigma}^T]^T$, 
 $\sigma \in \Sigma$ and $\hat{D}=D$. 
        Then there exists a non-singular  matrix $T$ such that 
        for all $\sigma \in \Sigma$, 
	\begin{equation}
    \label{iso:eq1}
       ~ TA_{\sigma}T^{-1}\!\!=\!\!\hat{A}_{\sigma}, ~ T[K_{\sigma}, B_{\sigma}]=[\hat{K}_{\sigma}, \hat{B}_{\sigma}], 
       ~ CT^{-1}\!\!=\!\! \hat{C}
    \end{equation}
\end{Corollary}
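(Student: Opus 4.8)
The plan is to reduce the statement to the isomorphism result for autonomous stationary GLSS (stationary \GBS) proved in \cite{PetreczkyBilinear}, applied to the stochastic component, and then to bootstrap the input-related matrices. First I would invoke Theorem \ref{decomp:lemma}: since $\mathbf{S}$ and $\hat{\mathbf{S}}$ are both sGLSS of $(\y,\bu,\p)$, the tuples $\mathbf{S}_s=(\{A_\sigma,K_\sigma\}_{\sigma\in\Sigma},C,F,\vb)$ and $\hat{\mathbf{S}}_s=(\{\hat A_\sigma,\hat K_\sigma\}_{\sigma\in\Sigma},\hat C,I,\eb^s)$ are asGLSS of the \emph{same} process $(\y^s,\p)$. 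Moreover, since both $\mathbf{S}$ and $\hat{\mathbf{S}}$ are in innovation form, $F=I$ and $\vb=\eb^s$, and by Theorem \ref{decomp:lemma:inv} the process $\eb^s$ is exactly the innovation process of $\y^s$ as defined in \eqref{decomp:lemma:innov}. Hence $\mathbf{S}_s$ and $\hat{\mathbf{S}}_s$ are both asGLSS of $(\y^s,\p)$ \emph{in innovation form} in the terminology of \cite{PetreczkyBilinear}, and both are assumed reachable and observable as stationary \GBS. By the uniqueness (up to isomorphism) of minimal asGLSS in innovation form from \cite[Theorem 2]{PetreczkyBilinear}, there is a nonsingular $T$ with $TA_\sigma T^{-1}=\hat A_\sigma$, $CT^{-1}=\hat C$, and $TK_\sigma = \hat K_\sigma G_\sigma$ for the appropriate relation between the noise gains; because both noise processes coincide ($\eb^s$) and their covariances $E[\eb^s(t)(\eb^s(t))^T\p_\sigma^2(t)]$ are nonsingular, the matching of innovation gains forces $TK_\sigma=\hat K_\sigma$ for all $\sigma$.

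Next I would handle the $B_\sigma$ and $D$ matrices. Using the same $T$, I would show $\hat{\mathbf{S}}$ and the transformed system $T\cdot\mathbf{S}$ (i.e. $(\{TA_\sigma T^{-1},TB_\sigma,TK_\sigma\}_\sigma,CT^{-1},D,I,\eb^s)$) are both sGLSS of $(\y,\bu,\p)$ sharing the same $A_\sigma,C,K_\sigma,D$ data. Subtracting the state equations and output equations of the two systems, and using the explicit mean-square series expression for the state (the displayed formula after Definition \ref{def:Stationary}), the difference state process is driven only by the terms involving $B_\sigma-\hat B_\sigma$ applied to $\z^{\bu}_{\sigma w}$. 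Since both systems produce the same output $\y$ and $\hat D = D$, the output-difference equation gives $C(\text{state difference})=0$ pointwise, i.e. in the observability space of the \GBS\ generating $\y^d$; combined with observability/reachability, this forces the state-difference process to vanish, and hence forces $T[K_\sigma,B_\sigma]=[\hat K_\sigma,\hat B_\sigma]$. The subspace inclusion hypothesis $\mathrm{Im}[B_\sigma^T,\hat B_\sigma^T]^T\subseteq\mathrm{Im}[K_\sigma^T,\hat K_\sigma^T]^T$ is precisely what is needed so that the $B_\sigma$-columns live in the span already pinned down by the $K_\sigma$-columns, ruling out "extra" directions that observability of $\mathbf{S}_s$ alone cannot see.

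Assembling the pieces: from the autonomous-part isomorphism I get \eqref{iso:eq1} for the $A_\sigma$ and $C$ blocks and the $K_\sigma$ block; from the output equations with $\hat D=D$ I get the $D$ consistency automatically; and from the state-difference argument plus the image inclusion I get the joint relation $T[K_\sigma,B_\sigma]=[\hat K_\sigma,\hat B_\sigma]$, which contains both the $K_\sigma$ and $B_\sigma$ equalities of \eqref{iso:eq1}. The main obstacle I anticipate is the last step: carefully justifying that equality of the two outputs $\y$ together with the innovation-form normalization forces the difference of the $B_\sigma$ contributions to vanish \emph{as processes}, not merely that the outputs agree. This requires using the ZMWSII/white-noise structure of $\bu$ and $\vb=\eb^s$ (so that the $\z^{\bu}_{w}$ and $\z^{\eb^s}_w$ families are mutually orthogonal and individually have nonsingular covariances), so that matching second-order statistics term-by-term in the series expansion actually yields matrix equalities; the image-inclusion hypothesis is what converts "$C$ annihilates the difference" into "the difference is zero" in the relevant coordinates. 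The rest is bookkeeping with the word-indexed covariance identities from Definition \ref{asm:A1} and Definition \ref{def:ZMWSSI}.
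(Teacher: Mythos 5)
Your first step coincides with the paper's: apply \cite[Theorem 2]{PetreczkyBilinear} to $\mathbf{S}_s$ and $\hat{\mathbf{S}}_s$, which are both observable and reachable asGLSSs of $(\y^s,\p)$ in innovation form (the nonsingularity of $E[\eb^s(t)(\eb^s(t))^T\p_{\sigma}^2(t)]$ is what licenses this), to obtain a nonsingular $T$ with $TA_{\sigma}T^{-1}=\hat{A}_{\sigma}$, $TK_{\sigma}=\hat{K}_{\sigma}$, $CT^{-1}=\hat{C}$. That part is fine.

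The gap is in how you handle the $B_{\sigma}$ block, and it comes from misreading the role of the image-inclusion hypothesis. The inclusion $\mathrm{Im}[B_{\sigma}^T,\hat{B}_{\sigma}^T]^T\subseteq\mathrm{Im}[K_{\sigma}^T,\hat{K}_{\sigma}^T]^T$ is a \emph{stacked} condition: it says there is a \emph{single} matrix $Z_{\sigma}$ with $B_{\sigma}=K_{\sigma}Z_{\sigma}$ and $\hat{B}_{\sigma}=\hat{K}_{\sigma}Z_{\sigma}$ simultaneously, so that $TB_{\sigma}=TK_{\sigma}Z_{\sigma}=\hat{K}_{\sigma}Z_{\sigma}=\hat{B}_{\sigma}$ follows in one line from $TK_{\sigma}=\hat{K}_{\sigma}$; this is the paper's entire second step, and no dynamical argument is needed. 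You instead propose to recover $TB_{\sigma}=\hat{B}_{\sigma}$ from equality of the outputs: subtract the state equations of $T\cdot\mathbf{S}$ and $\hat{\mathbf{S}}$, note $\hat{C}$ annihilates the state difference, and then claim "observability/reachability" forces the difference to vanish, with the image inclusion "ruling out extra directions". This does not hold together as stated. The observability and reachability hypotheses concern only the stochastic components $(\{A_{\sigma},K_{\sigma}\},C)$; nothing is assumed about minimality of the deterministic parts, and matching the deterministic output only yields relations of the form $\hat{C}\hat{A}_w(TB_{\sigma}-\hat{B}_{\sigma})Q_{\sigma}=0$ for words $w$ such that $\sigma w$ is admissible, which the observability rank condition of \cite{PetreczkyBilinear} (defined over its own word set) need not convert into $TB_{\sigma}=\hat{B}_{\sigma}$ when $\mathcal{E}\neq\Sigma\times\Sigma$; and if that argument did go through, the image-inclusion hypothesis would be superfluous, whereas in your sketch it is invoked but never actually used in a computation. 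In short: replace the whole output-matching argument by the observation that the stacked inclusion yields a common factor $Z_{\sigma}$, and the corollary follows immediately; the $D$ block is handled by the assumption $\hat{D}=D$, as you note.
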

\begin{proof}
  Since both $\mathbf{S}_s$ and $\hat{\mathbf{S}}_s$  are both minimal asGLSS of $(\y,\p)$ in innovation form, and by
	\cite[Theorem 2]{PetreczkyBilinear}, they are isomorphic, i.e., there exists a non-singular  matrix $T$ such that  
   $TA_{\sigma}T^{-1}=\hat{A}_{\sigma}$, $TK_{\sigma}=\hat{K}_{\sigma}$, $CT^{-1}=\hat{C}$ holds. Since 
   $\mathrm{Im}[B_{\sigma}^T,\hat{B}_{\sigma}^T]^T \subseteq \mathrm{Im}[K_{\sigma}^T,\hat{K}_{\sigma}^T]^T$,
   for some matrix $Z_{\sigma}$, 
   $B_{\sigma}=K_{\sigma}Z_{\sigma}$, $\hat{B}_{\sigma}=\hat{K}_{\sigma}Z_{\sigma}$, from which
   \eqref{iso:eq1} follows.
\end{proof}
Corollary \ref{col:min:is} provides sufficient conditions for two sGLSSs in innovation form to be isomorphic, and hence have the same 
deterministic behavior, as defined in Section \ref{sect:intro}. 



\section{Conclusion}
We have shown that 
outputs of stochastic  generalized linear switched systems can be decomposed into two parts, deterministic and stochastic one, and we used it to derive existence
of representation in innovation form and to formulate sufficient conditions for minimality and uniqueness of such representations up to isomorphism. Future work will be directed towards extending these results for a larger class of inputs and switching signals.

\section{Appendix: Proofs of Theorems \ref{decomp:lemma} and \ref{decomp:lemma:inv}}
\subsection{Proof of Theorem \ref{decomp:lemma}}\label{appendix1}
The proof of is an extension
of \cite[proof of Lemma 1]{mejari2019realization}.
Let  $\mathcal{H}_{t,+}^{\ub}$ be the closed subspace of $\mathcal{H}_1$ (see Notation \ref{hilbert:notation}) generated by the components of  $\{ \z^{\ub}_{w}(t) \}_{w \in \Sigma^{+}} \cup \{\ub(t)\}$.
\begin{Lemma}
	\label{decomp:lemma:pf1}
	\label{decomp:lemma:pf2}
	The entries of the variables $\vb(t)$ and $\{\z_{w}^{\vb}(t)\}_{w \in \Sigma^{+}}$ are orthogonal to $\mathcal{H}_{t,+}^{\ub}$. 
\end{Lemma}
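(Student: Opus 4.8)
The plan is to show that every generator of $\mathcal{H}_{t,+}^{\ub}$ is uncorrelated with the entries of $\vb(t)$ and of each $\z_w^{\vb}(t)$, $w \in \Sigma^{+}$, and then conclude by linearity and continuity of the inner product. Concretely, it suffices to prove that for every $v \in \Sigma^{+}$ and every $w \in \Sigma^{*}$ we have $E[\z_v^{\vb}(t)(\z_w^{\ub}(t))^T]=0$ and $E[\vb(t)(\z_w^{\ub}(t))^T]=0$ and $E[\z_v^{\vb}(t)(\ub(t))^T]=0$ and $E[\vb(t)(\ub(t))^T]=0$; since $\mathcal{H}_{t,+}^{\ub}$ is the closed span of the coordinates of $\{\z_w^{\ub}(t)\}_{w\in\Sigma^+}\cup\{\ub(t)\}$, orthogonality to all generators gives orthogonality to the whole subspace.

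First I would unwind the definitions: by \eqref{eqn:zwu}, $\z_v^{\vb}(t) = \vb(t-|v|)\uw{v}(t-1)/\sqrt{p_v}$ and $\z_w^{\ub}(t) = \ub(t-|w|)\uw{w}(t-1)/\sqrt{p_w}$, so the cross-covariances in question are, up to the positive scalars $1/\sqrt{p_vp_w}$, expectations of products of entries of $\vb$, entries of $\ub$, and products $\uw{v}(t-1)\uw{w}(t-1)$ of past switching variables. The key structural input is Assumption \ref{asm:main} together with item \textbf{1.} of Definition \ref{defn:LPV_SSA_wo_u}: the stacked process $\mathbf{w}=[\vb^T,\ub^T]^T$ is a white noise process w.r.t. $\p$. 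By the definition of white noise w.r.t. $\p$, the family $\{\z_u^{\mathbf{w}}(t)\}_{u\in\Sigma^+}$ is a family of uncorrelated random vectors, and moreover (using item \textbf{3.} of Definition \ref{asm:A1}, which expresses $1=\sum_\sigma\alpha_\sigma\uw{\sigma}(t)$) any product $\mathbf{w}(t-k)$ against $\uw{u}$-type quantities expands as a linear combination of the $\z_u^{\mathbf{w}}(t)$'s. Decomposing $\z_v^{\vb}(t)$ and $\z_w^{\ub}(t)$ (and $\vb(t)$, $\ub(t)$ themselves) into their $\mathbf{w}$-blocks, the cross terms I need are exactly the off-diagonal blocks of $E[\z_a^{\mathbf{w}}(t)(\z_b^{\mathbf{w}}(t))^T]$ for $a=b$ and the zero blocks for $a\neq b$. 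Since $\vb$ occupies the first $\nn$ coordinates of $\mathbf{w}$ and $\ub$ the last $\udim$ coordinates, the $\vb$-vs-$\ub$ cross-covariance is precisely an off-diagonal block of $E[\z_\sigma^{\mathbf{w}}(t)(\z_\sigma^{\mathbf{w}}(t))^T]$; I would invoke the convention, already used in the paper when writing $\mathbf{w}=[\vb^T,\ub^T]^T$ as a white-noise-w.r.t.-$\p$ process in innovation-type constructions, that $\vb$ and $\ub$ are uncorrelated "at the same index", i.e. $E[\z_\sigma^{\vb}(t)(\z_\sigma^{\ub}(t))^T]=0$ — this is the content of item \textbf{1.} plus the implicit block-diagonality of the noise covariance, and indeed $\ub$ being white noise w.r.t. $\p$ in its own right forces the relevant block structure. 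For the "different word" cases $a\neq b$, the white-noise-w.r.t.-$\p$ property gives $E[\z_a^{\mathbf{w}}(t)(\z_b^{\mathbf{w}}(t))^T]=0$ directly, and extracting the $(\vb,\ub)$ sub-blocks yields $E[\z_v^{\vb}(t)(\z_w^{\ub}(t))^T]=0$ whenever $v\neq w$; for the terms involving $\vb(t)$ or $\ub(t)$ without a word, I would expand $\ub(t)=\sum_\sigma\alpha_\sigma\ub(t)\uw{\sigma}(t)$ via item \textbf{3.} of Definition \ref{asm:A1} to reduce to $\z_\sigma^{\ub}(t)$-type terms (and similarly reduce $\vb(t)$), again landing in the already-handled cases.

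The main obstacle I anticipate is bookkeeping rather than depth: carefully matching the word indices when expanding products $\vb(t-|v|)\ub(t-|w|)\uw{v}(t-1)\uw{w}(t-1)$ and confirming that these indeed collapse onto the $\z^{\mathbf{w}}$-basis so that the white-noise-w.r.t.-$\p$ orthogonality applies, and being careful that the length-zero cases ($w=\epsilon$, giving $\z_\epsilon^{\ub}(t)=\ub(t)$ up to normalization) are subsumed. Once all generators are shown orthogonal to $\vb(t)$ and to each $\z_w^{\vb}(t)$, closedness of $\mathcal{H}_{t,+}^{\ub}$ and continuity of $\langle\cdot,\cdot\rangle$ finish the proof.
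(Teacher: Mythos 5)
Your overall route is the same as the paper's: stack the noise and input into $\mathbf{w}=\begin{bmatrix}\vb^T & \ub^T\end{bmatrix}^T$, use the fact that $\mathbf{w}$ is white noise w.r.t.\ $\p$ (item \textbf{1.} of Definition \ref{def:Stationary}) to kill all cross-covariances $E[\z^{\vb}_v(t)(\z^{\ub}_w(t))^T]$ with $v\neq w$, reduce the lag-zero variables $\vb(t)$, $\ub(t)$ via $\sum_{\sigma}\alpha_\sigma\p_\sigma=1$ to word-indexed terms (note the resulting terms live at time $t+1$, e.g.\ $\ub(t)\p_\sigma(t)=\sqrt{p_\sigma}\,\z^{\ub}_{\sigma}(t+1)$, a bookkeeping point you flagged yourself), extract the $(\vb,\ub)$ blocks, and conclude by linearity and continuity on the closed span. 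All of that matches the paper's argument.

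The gap is at exactly the one case that whiteness cannot settle: the same-word term. For $v=w$ with first letter $\sigma$ one gets $E[\z^{\vb}_w(t)(\z^{\ub}_w(t))^T]=E[\z^{\vb}_\sigma(t)(\z^{\ub}_\sigma(t))^T]=\tfrac{1}{p_\sigma}E[\vb(t-1)\ub^T(t-1)\p_\sigma^2(t-1)]$, which is an off-diagonal block of the single-letter covariance $E[\z^{\mathbf{w}}_\sigma(t)(\z^{\mathbf{w}}_\sigma(t))^T]$, and the definition of white noise w.r.t.\ $\p$ imposes nothing on that block. Your stated justification --- that ``$\ub$ being white noise w.r.t.\ $\p$ in its own right forces the relevant block structure'' --- is not correct: whiteness of $\ub$ constrains only the $\ub$--$\ub$ covariances and says nothing about the $\vb$--$\ub$ cross block (take $\vb$ a noisy copy of $\ub$, jointly i.i.d.\ and independent of $\p$: then $\mathbf{w}$ and $\ub$ are both white w.r.t.\ $\p$ with nonsingular covariances, yet $E[\vb(t)\ub^T(t)]\neq 0$, so the properties you invoke cannot imply the claimed orthogonality). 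Appealing to an ``implicit convention'' of block-diagonality is likewise not a proof. The paper instead obtains $E[\vb(t)\ub^T(t)\p_\sigma^2(t)]=0$ (and the $\p_i\p_j$, $i\neq j$, analogues) from the standing sGLSS hypotheses of Definition \ref{def:Stationary} together with the ZMWSII facts of \cite[Lemma 7]{PetreczkyBilinear}; your proof needs to ground this step explicitly in those assumptions rather than in the whiteness of $\ub$ or of the stacked process.
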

\begin{pf}
        Define
	$\mathbf{r}(t):=\begin{bmatrix} \vb^T(t) & \ub^T(t) \end{bmatrix}^T$. By the definition of a sGLSS, $\mathbf{r}$ is ZMWSII and a
        white noise process w.r.t. $\p$. Moreover, $\vb(t)$ is the upper $\nn$ block of $\r(t)$.
        Since $\mathbf{r}$ is a white noise w.r.t. $\p$, 
	$E[\mathbf{r}(t)(\z^{\mathbf{r}}_{w}(t))^T]=0$, and  $\frac{1}{\sqrt{p_{\sigma}}} E[\vb(t)(\z_{w}^{\ub}(t))^T]$ is the lower-left block of 
        that latter matrix, and hence it is also zero.
	Since  $E[\vb(t)(\ub(t))^T\p_{i}^2(t)]=0$ and  $E[\vb(t)(\ub(t))^T\p_{i}(t)\p_j(t)]=0$ for $i \ne j$ due to $\mathbf{r}$ being ZMWSII (\cite[Lemma 7]{PetreczkyBilinear}),
	and  	$\sum_{i=1}^{n_p} \alpha_i \p_i=1$ for some $\{\alpha_i\}_{i=1}^{\pdim}$, it follows
	$E[\vb(t)(\ub(t))^T]=\sum_{i,j=1}^{\pdim} \alpha_i\alpha_j E[\vb(t)(\ub(t))^T\p_i(t)\p_j(t)]=0$.
	That is, $\vb(t)$ is orthogonal to $\mathcal{H}_{t,+}^{\ub}$. 
	Since $\mathbf{r}(t)$ is a ZMWSII, from \cite[Lemma 7]{PetreczkyBilinear} it follows that
	$E[\z_{w}^{\mathbf{r}}(t)(\z^{\mathbf{r}}_{v}(t))^T]=0$ for all $v \in \Sigma^{+}$, $v \ne w$ or $v \notin L$ or $w \notin L$, and 
	if $v=w \in L$ and $\sigma$ is the first letter of $w$, then  $E[\z_{w}^{\mathbf{r}}(t)(\z^{\mathbf{r}}_{w}(t))^T]=E[\z_{\sigma}^{\mathbf{r}}(t)(\z^{\mathbf{r}}_{\sigma}(t))^T]$.
	
	Since $E[\z_{w}^{\vb}(t)(\z^{\ub}_{v}(t))^T]$ is the upper right block of $E[\z_{w}^{\mathbf{r}}(t)(\z^{\mathbf{r}}_{v}(t))^T]$, it follows that 
	$E[\z_{w}^{\vb}(t)(\z^{\ub}_{v}(t))^T]=0$ if $v \ne w$ and  $E[\z_{w}^{\vb}(t)(\z^{\ub}_{w}(t))^T]=E[\z_{\sigma}^{\vb}(t)(\z^{\ub}_{\sigma}(t))^T]=\frac{1}{p_{\sigma}} E[\ub(t-1)\vb(t-1)\p_{\sigma}^2(t-1)]$, where $\sigma$ is the first letter of $w$, and from  Definition \ref{def:Stationary}, it follows that the latter expectation is zero. That is, $E[\z_{w}^{\vb}(t)(\z^{\ub}_{v}(t))^T]=0$ for all $v \in \Sigma^{+}$. 
	
	Since $\mathbf{r}(t)$ is a 
	white noise w.r.t. $\p$,
    by \cite[Lemma 7]{PetreczkyBilinear}
    $E[\z_{w}^{\mathbf{r}}(t)(\mathbf{r}(t))^T]=E[\z_{ws}{\mathbf{r}}(t) (\z_{s}^{\mathbf{r}}(t))^T]=0$
    for any $s \in \Sigma^{+}$, and since  
	$E[\z_{w}^{\vb}(t)(\ub(t))^T]$ is the upper right block of $E[\z_{w}^{\mathbf{r}}(t)(\mathbf{r}(t))^T]$,  $E[\z_{w}^{\vb}(t)(\ub(t))^T]=0$. 
	Since $\z_w^{\vb}(t)$ is uncorrelated with random variable which generate $\mathcal{H}_{t,+}^{\ub}$, the statement of the lemma follows.
\end{pf}

Let us denote by $\mathcal{H}_{t}^{\ub}$, the closed subspace generated by the components of  $\{ \z^{\ub}_{w}(t) \}_{w \in \Sigma^{+}}$. It is clear that $\mathcal{H}_{t}^{\ub} \subseteq \mathcal{H}_{t,+}^{\ub}$.
Define	$\xb^d(t)=E_l[\xb(t)  \mid \{ \z^{\ub}_{w}(t) \}_{w \in \Sigma^{+}} \cup \{\ub(t)\}]$.
\begin{Lemma}
	\label{decomp:lemma:pf3}
	The entries of $\xb^d(t)$ belong to $\mathcal{H}_{t}^{\ub}$ and 
	\begin{equation}
		\label{decomp:lemma:pf3:eq2}
		\xb^d(t) = \sum_{w \in \Sigma^{*}, \sigma \in \Sigma, \sigma w \in L} \sqrt{p_{\sigma w}} A_w B_{\sigma}\z^{\ub}_{\sigma w}(t),
	\end{equation}
	where the convergence is in the mean square sense. 
\end{Lemma}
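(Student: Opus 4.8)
\textbf{Proof plan for Lemma \ref{decomp:lemma:pf3}.}
The plan is to start from the explicit mean-square convergent series for $\xb(t)$ quoted from \cite[Lemma 2]{PetreczkyBilinear} just before the Notation, namely
$\xb(t)= \sum_{\sigma w \in L} \sqrt{p_{\sigma w}} A_w(K_{\sigma}\z^{\vb}_{\sigma w}(t) + B_{\sigma}\z^{\ub}_{\sigma w}(t))$,
and to apply the orthogonal projection $E_l[\cdot \mid \mathcal{H}_{t,+}^{\ub}]$ term by term. The two ingredients needed are: (i) projection commutes with the mean-square convergent infinite sum, because $E_l[\cdot\mid M]$ is a bounded linear operator on $\mathcal{H}_1$ and hence continuous; and (ii) each summand projects cleanly. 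For the $\vb$-part, Lemma \ref{decomp:lemma:pf1}/\ref{decomp:lemma:pf2} says every entry of $\z^{\vb}_{\sigma w}(t)$ is orthogonal to $\mathcal{H}_{t,+}^{\ub}$, so $E_l[A_wK_{\sigma}\z^{\vb}_{\sigma w}(t)\mid \mathcal{H}_{t,+}^{\ub}]=0$. For the $\ub$-part, every entry of $\z^{\ub}_{\sigma w}(t)$ already lies in $\mathcal{H}_{t}^{\ub}\subseteq \mathcal{H}_{t,+}^{\ub}$ (by definition of $\mathcal{H}_t^{\ub}$), so the projection leaves $A_wB_{\sigma}\z^{\ub}_{\sigma w}(t)$ unchanged. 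Summing the surviving terms yields exactly \eqref{decomp:lemma:pf3:eq2}, and since each surviving summand has entries in the closed subspace $\mathcal{H}_t^{\ub}$ and the sum converges in mean square, the limit $\xb^d(t)$ also has entries in $\mathcal{H}_t^{\ub}$, giving the first assertion.

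Concretely I would proceed in this order. First, recall that $\xb^d(t)$ is defined as $E_l[\xb(t)\mid \{\z^{\ub}_w(t)\}_{w\in\Sigma^+}\cup\{\ub(t)\}]$, i.e.\ projection onto $\mathcal{H}_{t,+}^{\ub}$. Second, write $\xb(t)=\lim_{N\to\infty}\xb_N(t)$ where $\xb_N(t)$ is the partial sum over words $\sigma w$ with $|\sigma w|\le N$, the limit being coordinatewise in $\mathcal{H}_1$. Third, invoke continuity of $E_l[\cdot\mid \mathcal{H}_{t,+}^{\ub}]$ to get $\xb^d(t)=\lim_N E_l[\xb_N(t)\mid \mathcal{H}_{t,+}^{\ub}]$. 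Fourth, for the finite sum $\xb_N(t)$ use linearity of projection and split each term into its $K_{\sigma}\z^{\vb}_{\sigma w}$ and $B_{\sigma}\z^{\ub}_{\sigma w}$ parts; apply Lemma \ref{decomp:lemma:pf1} to kill the former and the trivial inclusion $\mathcal{H}_t^{\ub}\subseteq\mathcal{H}_{t,+}^{\ub}$ to keep the latter. Fifth, pass to the limit to obtain \eqref{decomp:lemma:pf3:eq2}; the right-hand side converges in mean square because it is a subseries of the original convergent series. Finally, observe that each term $\sqrt{p_{\sigma w}}A_wB_{\sigma}\z^{\ub}_{\sigma w}(t)$ has all entries in $\mathcal{H}_t^{\ub}$, which is closed, so the mean-square limit $\xb^d(t)$ does too.

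I do not expect any serious obstacle here; this is essentially a ``projection commutes with the series and acts diagonally on the two noise blocks'' argument. The one point that needs a line of care is the justification that projection commutes with the infinite sum — it rests only on the fact that orthogonal projection onto a closed subspace is a contraction (hence continuous) on $\mathcal{H}_1$, applied coordinatewise to the $\mathbb{R}^{\nx}$-valued variables. A second minor point is making sure the indexing set in \eqref{decomp:lemma:pf3:eq2} matches: the original series runs over $\sigma\in\Sigma$, $w\in\Sigma^*$ with $\sigma w\in L$, and this is exactly what appears in the claimed formula, so no reindexing is needed. If I wanted to be fully careful about the $\{\ub(t)\}$ generator being part of $\mathcal{H}_{t,+}^{\ub}$ but not of $\mathcal{H}_t^{\ub}$, I would note that the $\ub(t)$-term would only enter if the series for $\xb(t)$ contained a $|w|=0$, $|\sigma w|=0$ contribution of the form $\z^{\ub}_{\epsilon}(t)=\ub(t)$, but the state series starts at words $\sigma w$ with $\sigma\in\Sigma$ nonempty, so every $\ub$-term already sits in $\mathcal{H}_t^{\ub}$; this is why the conclusion can be sharpened to ``entries of $\xb^d(t)$ belong to $\mathcal{H}_t^{\ub}$'' rather than merely to $\mathcal{H}_{t,+}^{\ub}$.
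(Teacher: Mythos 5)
Your proposal is correct and follows essentially the same route as the paper's proof: project the mean-square convergent state series term by term using continuity of $E_l[\cdot\mid\mathcal{H}_{t,+}^{\ub}]$, kill the $\z^{\vb}_{\sigma w}$ terms via Lemma \ref{decomp:lemma:pf1}, keep the $\z^{\ub}_{\sigma w}$ terms unchanged, and conclude membership in the closed subspace $\mathcal{H}_t^{\ub}$. Your extra remarks on partial sums and on why no $\ub(t)$-term appears in the state series are slightly more explicit than the paper but change nothing of substance.
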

\begin{pf}
	It is clear from the definition that the components of $\xb^d(t)$ belong to $\mathcal{H}_{t,+}^{\ub}$. 
	From Lemma \ref{decomp:lemma:pf2} it follows that,  $E_l[\z^{\vb}_{\sigma w}(t) \mid H_{t,+}^{\ub}]=0$, and since the components of $\z^{\ub}_{\sigma w}(t)$ belong to $\mathcal{H}_{t,+}^{\ub}$, it follows that $E_l[\z^{\ub}_{\sigma w}(t) \mid H_{t,+}^{\ub}]=\z^{\ub}_{\sigma w}(t)$, 
	Since \eqref{stat:state:eq1} holds
    and the map $z \mapsto E_l[z \mid M]$ (where $z \in \mathcal{H}_1$) is a continuous linear operator for any closed subspace $M$, 
    it follows that $\x^d(t)$ will be the infinite sum of the elements 
    $\sqrt{p_{\sigma w}} A_w \left(K_{\sigma} E_l[\z^{\vb}_{\sigma w}(t) \mid H_{t,+}^{\ub}] + B_{\sigma}E_l[\z^{\ub}_{\sigma w}(t) \mid H_{t,+}^{\ub}]\right)$, i.e.,
    \eqref{decomp:lemma:pf3:eq2} holds.
	Since the components of $\z^{\ub}_{\sigma w}(t)$ belong to $\mathcal{H}_{t}^{\ub}$, the components of the right-hand side of 
	\eqref{decomp:lemma:pf3:eq2} belong to $\mathcal{H}_{t}^{\ub}$ and hence the components of $\xb^d(t)$ belong to $\mathcal{H}_{t}^{\ub}$.
	The convergence of the right-hand side  of \eqref{decomp:lemma:pf3:eq2} in the mean square sense follows from that of 
	\eqref{stat:state:eq1}.
\end{pf}
\begin{Lemma}
	\label{decomp:lemma:pf4}
	Define  $\xb^s(t)=\xb(t)-\xb^d(t)$.
	The entries of $\xb^s(t)$ belong to $\mathcal{H}_{t}^{\vb}$, they are orthogonal to $\mathcal{H}_{t,+}^{\ub}$ and 
	\begin{equation}
		\label{decomp:lemma:pf4:eq2}
		\xb^s(t) = \sum_{w \in \Sigma^{*}, \sigma \in \Sigma, \sigma w \in L} \sqrt{p_{\sigma w}} A_w K_{\sigma}\z^{\vb}_{\sigma w}(t),
	\end{equation}
	where the sum converges in the mean-square sense.
\end{Lemma}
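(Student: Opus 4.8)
The plan is to derive \eqref{decomp:lemma:pf4:eq2} by term-by-term subtraction of the series for $\xb^d(t)$ from the series for $\xb(t)$, and then to read off the two remaining claims from this representation. First I would observe that \eqref{stat:state:eq1} writes $\xb(t)$ as the mean-square convergent sum $\sum_{\sigma \in \Sigma, w \in \Sigma^{*}, \sigma w \in L}\sqrt{p_{\sigma w}}A_w\bigl(K_\sigma \z^{\vb}_{\sigma w}(t)+B_\sigma \z^{\ub}_{\sigma w}(t)\bigr)$, while Lemma \ref{decomp:lemma:pf3} writes $\xb^d(t)$ as the mean-square convergent sum $\sum_{\sigma \in \Sigma, w \in \Sigma^{*}, \sigma w \in L}\sqrt{p_{\sigma w}}A_w B_\sigma\z^{\ub}_{\sigma w}(t)$, indexed by \emph{exactly the same} set of pairs $(\sigma,w)$. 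Since coordinate-wise mean-square convergence is convergence in the Hilbert space $\mathcal{H}_1$, and the map $(z_1,z_2)\mapsto z_1-z_2$ is continuous there, the difference $\xb^s(t)=\xb(t)-\xb^d(t)$ is the mean-square limit of the partial sums obtained by subtracting the partial sums of the two series; the $B_\sigma\z^{\ub}_{\sigma w}(t)$ contributions cancel term by term, leaving precisely $\sum_{\sigma \in \Sigma, w \in \Sigma^{*}, \sigma w \in L}\sqrt{p_{\sigma w}}A_w K_\sigma\z^{\vb}_{\sigma w}(t)$, that is, \eqref{decomp:lemma:pf4:eq2}, with mean-square convergence inherited from \eqref{stat:state:eq1}.

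For the membership claim, each variable $\z^{\vb}_{\sigma w}(t)$ has its entries in $\mathcal{H}_t^{\vb}$ by definition of the latter, the partial sums of \eqref{decomp:lemma:pf4:eq2} are finite linear combinations of such entries, and $\mathcal{H}_t^{\vb}$ is a closed subspace of $\mathcal{H}_1$; hence the mean-square limit $\xb^s(t)$ also has its entries in $\mathcal{H}_t^{\vb}$. For orthogonality to $\mathcal{H}_{t,+}^{\ub}$, I would note that $\xb^d(t)=E_l[\xb(t)\mid \mathcal{H}_{t,+}^{\ub}]$ by definition, so $\xb^s(t)=\xb(t)-E_l[\xb(t)\mid \mathcal{H}_{t,+}^{\ub}]$ is the residual of an orthogonal projection and is therefore orthogonal to $\mathcal{H}_{t,+}^{\ub}$; alternatively this follows directly from Lemma \ref{decomp:lemma:pf2}, since every $\z^{\vb}_{\sigma w}(t)$ is orthogonal to $\mathcal{H}_{t,+}^{\ub}$ and orthogonality passes to closed linear spans and to mean-square limits.

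There is no genuine obstacle in this lemma; it is bookkeeping on top of \eqref{stat:state:eq1} and Lemma \ref{decomp:lemma:pf3}. The only points requiring care are that the two series are indexed identically so that the cancellation is literally termwise, and that $\mathcal{H}_t^{\vb}$ is closed so that mean-square limits remain inside it — both of which are already built into the setup.
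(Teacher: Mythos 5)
Your proof is correct and follows essentially the same route as the paper: subtract the series \eqref{decomp:lemma:pf3:eq2} for $\xb^d(t)$ from the series \eqref{stat:state:eq1} for $\xb(t)$ to obtain \eqref{decomp:lemma:pf4:eq2}, then use Lemma \ref{decomp:lemma:pf2} (orthogonality of the $\z^{\vb}_w(t)$ to $\mathcal{H}_{t,+}^{\ub}$) together with closedness of the relevant subspaces to get the membership and orthogonality claims. The extra observation that orthogonality also follows from $\xb^s(t)$ being the residual of the projection defining $\xb^d(t)$ is a valid shortcut but not a genuinely different argument.
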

\begin{pf}
	From \eqref{decomp:lemma:pf3:eq2}, $\xb^s(t)=\xb(t)-\xb^d(t)$
	and   \eqref{stat:state:eq1}, 
	it follows that \eqref{decomp:lemma:pf4:eq2} holds. By Lemma \ref{decomp:lemma:pf2}, $\{\z_{w}^{\vb}(t)\}_{w \in \Sigma^{+}}$ are orthogonal to $\mathcal{H}_{t,+}^{\ub}$, hence
	all the summands in the right-hand side of \eqref{decomp:lemma:pf4:eq2} are orthogonal to $\mathcal{H}_{t,+}^{\ub}$.
\end{pf}
\begin{pf}[Proof of Theorem \ref{decomp:lemma}]
 Since $\sum_{\sigma \in \Sigma} p_{\sigma} A_{\sigma} \otimes A_{\sigma}$ is stable and $\ub$ and $\vb$ are both white noise processes w.r.t. $\p$,      
  and from  \eqref{decomp:lemma:pf3:eq2}-\eqref{decomp:lemma:pf4:eq2} 
 and \cite[Lemma 3]{PetreczkyBilinear} it follows that $\xb^d$ is the unique state process of $\mathbf{S}_d$ and $\xb^s$ is the unique state process of $\mathbf{S}_s$. 
Notice that 
	\[
	\begin{split}
		&\yb^d(t)=E_l[\yb(t) \mid \mathcal{H}_{t,+}^{\ub}]=\\&=CE_l[\xb(t) \mid \mathcal{H}_{t,+}^{\ub}]+D E_l[\ub(t) \mid \mathcal{H}_{t,+}^{\ub}]+E_l[\vb(t) \mid \mathcal{H}_{t,+}^{\ub}].
	\end{split}
	\]
	By Lemma \ref{decomp:lemma:pf1}, $\vb(t)$ is orthogonal to 
	$\mathcal{H}_{t,+}^{\ub}$, $E_l[\vb(t) \mid \mathcal{H}_{t,+}^{\ub}]=0$ and as the components $\bu(t)$ belong to $\mathcal{H}_{t,+}^{\ub}$,
	$E_l[\ub(t) \mid \mathcal{H}_{t,+}^{\ub}]=\ub(t)$. Hence, 
	\( \yb^d(t)=C\xb^d(t)+D\bu(t) \) and \( \yb^s(t)=C\xb^s(t)+F\vb(t) \). 
That is,   $\mathbf{S}_d$ is an asGLSS of $(\yb^d,\p)$ and $\mathbf{S}_s$ is an asGLSS of $(\yb^s,\p)$  respectively.
\end{pf}
\subsection{Proof of Theorem \ref{decomp:lemma:inv}}\label{app:proof_lem2}
The proof of Theorem \ref{decomp:lemma:inv} is an adaptation
of \cite[proof of Lemma 2]{mejari2019realization}. 
Assume that $\mathbf{S}$ of the form \eqref{eqn:LPV_SSA}  is a sGLSS of $(\yb,\ub,\p)$. 
Denote by $\mathcal{H}_{t,+}^{\vb}$ and $\mathcal{H}_t^{\vb}$  the closed subspaces of $\mathcal{H}_1$ (see Notation \ref{hilbert:notation}) 
generated by the components $\{\z^{\vb}_w(t)\}_{w \in \Sigma^{+}} \cup \{\vb(t)\}$ and $\{\z^{\vb}_w(t)\}_{w \in \Sigma^{+}}$ respectively.
\begin{Lemma}\label{decomp:lemma:inv:pf2.1}
	The  entries of 
	$\{\z^{\yb^s}_v(t), \z^{\eb^s}_v(t)\}_{v \in \Sigma^{+}}$, $\yb^s(t),\eb^s(t)$
	belong to $\mathcal{H}_{t,+}^{\vb}$.
\end{Lemma}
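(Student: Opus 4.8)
The plan is to trace each of the four families of variables back into $\mathcal{H}_{t,+}^{\vb}$ using the state-space description of $\yb^s$ supplied by Theorem \ref{decomp:lemma}, together with one auxiliary fact: multiplication by a past switching product $\uw{v}(t-1)/\sqrt{p_v}$ carries the ``past-noise'' subspace at time $t-|v|$ isometrically into the one at time $t$. Write $\mathcal{H}_{s,+}^{\vb}$ (resp. $\mathcal{H}_{s}^{\vb}$) for the subspaces defined exactly as $\mathcal{H}_{t,+}^{\vb}$, $\mathcal{H}_{t}^{\vb}$ but with $t$ replaced by $s$.

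First I would record that, by (the proof of) Theorem \ref{decomp:lemma}, $\mathbf{S}_s=(\{A_{\sigma},K_{\sigma}\}_{\sigma=1}^{\pdim},C,F,\vb)$ is an asGLSS of $(\yb^s,\p)$, so $\yb^s(t)=C\xb^s(t)+F\vb(t)$, and by Lemma \ref{decomp:lemma:pf4} the entries of $\xb^s(t)$ lie in $\mathcal{H}_{t}^{\vb}\subseteq\mathcal{H}_{t,+}^{\vb}$; since $\vb(t)$ also has entries in $\mathcal{H}_{t,+}^{\vb}$, this proves the claim for $\yb^s(t)$. The identity and the lemma hold at every time instant, hence the entries of $\yb^s(t-k)=C\xb^s(t-k)+F\vb(t-k)$ lie in $\mathcal{H}_{t-k,+}^{\vb}$ for all $k\ge 0$.

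Next I would prove the auxiliary fact. Fix $v\in\Sigma^{+}$ with $|v|=k$; if $v\notin L$ then all the variables in question are identically zero, so assume $v\in L$. The map $z\mapsto z\,\uw{v}(t-1)/\sqrt{p_v}$ sends the generators $\z^{\vb}_w(t-k)$, $\vb(t-k)$ of $\mathcal{H}_{t-k,+}^{\vb}$ to $\z^{\vb}_{wv}(t)$, $\z^{\vb}_v(t)$ respectively, by the concatenation identity $\uw{w}(t-k-1)\uw{v}(t-1)=\uw{wv}(t-1)$; both images lie in $\mathcal{H}_{t,+}^{\vb}$. Moreover it is an isometry on $\mathcal{H}_{t-k,+}^{\vb}$: any $z$ there is measurable with respect to the $\sigma$-algebra generated by $\{\vb(j)\}_{j\le t-k}\cup\{\p(j)\}_{j<t-k}$, which by the ZMWSII conditional-independence property of $\vb$ is conditionally independent of $\uw{v}(t-1)$ given $\mathscr{F}^{\p,-}_{t-k}$, while iterating part 2 of Definition \ref{asm:A1} yields $E[\uw{v}^2(t-1)/p_v\mid\mathscr{F}^{\p,-}_{t-k}]=1$; combining these two facts gives $E[(z\,\uw{v}(t-1)/\sqrt{p_v})^2]=E[z^2]$. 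An isometry is continuous, so it maps the closed subspace $\mathcal{H}_{t-k,+}^{\vb}$ into the closed subspace $\mathcal{H}_{t,+}^{\vb}$.

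Finally I would assemble the statement. Since $\z^{\yb^s}_v(t)=\yb^s(t-k)\,\uw{v}(t-1)/\sqrt{p_v}$ with $k=|v|$, the auxiliary fact applied to $\yb^s(t-k)$ shows its entries lie in $\mathcal{H}_{t,+}^{\vb}$. For $\eb^s(t)=\yb^s(t)-E_l[\yb^s(t)\mid\{\z^{\yb^s}_w(t)\}_{w\in\Sigma^{+}}]$, the closed subspace generated by $\{\z^{\yb^s}_w(t)\}_{w}$ lies in the closed subspace $\mathcal{H}_{t,+}^{\vb}$ by the previous sentence, so the projection --- and hence $\eb^s(t)$ --- has entries in $\mathcal{H}_{t,+}^{\vb}$. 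Applying this last argument at time $t-k$ shows the entries of $\eb^s(t-k)$ lie in $\mathcal{H}_{t-k,+}^{\vb}$, whence the auxiliary fact applied to $\eb^s(t-k)$ gives that the entries of $\z^{\eb^s}_v(t)=\eb^s(t-k)\,\uw{v}(t-1)/\sqrt{p_v}$ lie in $\mathcal{H}_{t,+}^{\vb}$. The step I expect to be the main obstacle is precisely the auxiliary isometry: verifying that multiplication by $\uw{v}(t-1)/\sqrt{p_v}$ preserves mean-square convergence, so that the inclusion passes from the generators to the whole closed subspace --- this is where the ZMWSII conditional-independence assumption and the admissibility normalization $E[\uw{v}^2(t-1)/p_v\mid\mathscr{F}^{\p,-}_{t-k}]=1$ are genuinely used, and it is needed because $\eb^s$ is defined through an infinite-dimensional orthogonal projection rather than a finite formula. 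The rest is routine bookkeeping with the concatenation identity and standard Hilbert-space facts.
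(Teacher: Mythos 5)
Your proof is correct and follows the same chain as the paper's: $\yb^s(t)=C\xb^s(t)+F\vb(t)$ with $\xb^s(t)$ in $\mathcal{H}_t^{\vb}$ via Lemma \ref{decomp:lemma:pf4}, then pass to $\z^{\yb^s}_v(t)$, then to $\eb^s(t)$ as an orthogonal projection onto a subspace of $\mathcal{H}_{t,+}^{\vb}$, then to $\z^{\eb^s}_v(t)$. The only difference is that where the paper invokes Lemma 11 of \cite{PetreczkyBilinear} for the step ``entries of $\r(s)$ in $\mathcal{H}_{s,+}^{\vb}$ for all $s$ implies entries of $\z^{\r}_v(t)$ in $\mathcal{H}_{t,+}^{\vb}$,'' you re-derive it from scratch via the conditional-independence isometry argument, which is a valid self-contained substitute.
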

\begin{pf}
    Recall from the proof of Theorem \ref{decomp:lemma} that 
	$\yb^s(t)=C\xb^s(t)+\vb(t)$.
	Then by \eqref{decomp:lemma:pf4:eq2},
     the components of $\yb^s(t)$ belong to $\mathcal{H}_{t,+}^{\vb}$. 
	Then by \cite[Lemma 11]{PetreczkyBilinear}, the components of 
	$\z^{\yb^s}_v(t)$ belong to $\mathcal{H}_{t,+}^{\vb}$ and hence, $\mathcal{H}_{t}^{\yb^s} \subseteq \mathcal{H}_{t,+}^{\vb}$.
	Since $\eb^s(t)=\yb^s(t)-E_l[\yb^s(t) \mid \mathcal{H}_{t}^{\yb^s}] $, this then implies 
	that the components of 
	$\eb^s(t)$ belong to $\mathcal{H}_{t,+}^{\vb}$. Since $\z_{v}^{\vb}(t)=\sum_{i=1}^{\pdim} \alpha_i \z_{vi}^{\vb}(t+1)$, $\vb(t)=\sum_{i=1}^{\pdim} \alpha_i \z_{i}^{\vb}(t+1)$, as $\sum_{i=1}^{\pdim} \alpha_i \p_i =1$, it follows that
	$\mathcal{H}_{t,+}^{\vb} \subseteq \mathcal{H}_{t+1}^{\vb}$ and from  \cite[Lemma 11]{PetreczkyBilinear} it follows that
	the components of $\z_v^{\eb^s}(t)$ belong to $\mathcal{H}_{t}^{\vb} \subseteq \mathcal{H}_{t,+}^{\vb}$. 
\end{pf}

\begin{Lemma}
\label{decomp:lemma:inv:pf2.2} 
The entries of $\{\z^{\yb^s}_v(t),\z^{\eb^s}_v(t)\}_{v \in \Sigma^{+}}$,
$\yb^s(t)$ and $\eb^s(t)$ are orthogonal to $\mathcal{H}_{t,+}^{\ub}$.
\end{Lemma}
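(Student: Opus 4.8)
The plan is to deduce the lemma directly from two results already in hand: Lemma~\ref{decomp:lemma:inv:pf2.1}, which places all the random variables in question inside $\mathcal{H}_{t,+}^{\vb}$, and Lemma~\ref{decomp:lemma:pf2} (established in the proof of Theorem~\ref{decomp:lemma}), which asserts that the generators of $\mathcal{H}_{t,+}^{\vb}$ are orthogonal to $\mathcal{H}_{t,+}^{\ub}$. The only genuinely new observation needed is the elementary Hilbert-space fact that a vector orthogonal to a generating set of a closed subspace is orthogonal to the whole subspace.

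Concretely, I would first recall that $\mathcal{H}_{t,+}^{\vb}$ is by definition the closed linear span of the entries of $\vb(t)$ and of $\{\z^{\vb}_w(t)\}_{w\in\Sigma^{+}}$. Applying Lemma~\ref{decomp:lemma:pf2} to the sGLSS $\mathbf{S}$ of $(\yb,\ub,\p)$ fixed at the start of this subsection --- whose noise process is precisely $\vb$ --- every one of these generators is orthogonal to $\mathcal{H}_{t,+}^{\ub}$. Since $(\mathcal{H}_{t,+}^{\ub})^{\perp}$ is a closed subspace of $\mathcal{H}_1$, it follows that $\mathcal{H}_{t,+}^{\vb}\subseteq(\mathcal{H}_{t,+}^{\ub})^{\perp}$, i.e.\ $\mathcal{H}_{t,+}^{\vb}\perp\mathcal{H}_{t,+}^{\ub}$. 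Then, by Lemma~\ref{decomp:lemma:inv:pf2.1}, the entries of $\{\z^{\yb^s}_v(t),\z^{\eb^s}_v(t)\}_{v\in\Sigma^{+}}$, of $\yb^s(t)$ and of $\eb^s(t)$ all lie in $\mathcal{H}_{t,+}^{\vb}\subseteq(\mathcal{H}_{t,+}^{\ub})^{\perp}$, which is exactly the assertion.

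I do not anticipate a real obstacle; the substance is entirely contained in the two invoked lemmas. The only point requiring care is bookkeeping: one must ensure that Lemma~\ref{decomp:lemma:pf2} is applied to the $\mathbf{S}$ of $(\yb,\ub,\p)$ posited to exist in Theorem~\ref{decomp:lemma:inv}, and recall that $\yb^s$ and $\eb^s$, defined by~\eqref{decomp:outp:eq2} and~\eqref{decomp:lemma:innov}, depend only on $(\yb,\ub,\p)$ and not on the particular realizations $\hat{\mathbf{S}}_d,\hat{\mathbf{S}}_s$ appearing in the statement, so that Lemma~\ref{decomp:lemma:inv:pf2.1} applies as stated.
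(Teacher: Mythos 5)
Your proposal is correct and follows essentially the same route as the paper: the paper's proof likewise combines Lemma~\ref{decomp:lemma:pf1} (orthogonality of the generators of $\mathcal{H}_{t,+}^{\vb}$ to $\mathcal{H}_{t,+}^{\ub}$) with Lemma~\ref{decomp:lemma:inv:pf2.1} (membership of the relevant entries in $\mathcal{H}_{t,+}^{\vb}$) and concludes by the same closed-span orthogonality argument. Your version merely spells out the Hilbert-space step and the bookkeeping about which system $\vb$ comes from, which the paper leaves implicit.
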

\begin{pf}
	Using Lemma \ref{decomp:lemma:pf1} and  
	it follows that the elements of $\mathcal{H}_{t,+}^{\vb}$ are orthogonal to $\mathcal{H}_{t,+}^{\ub}$.
	Since the coordinates of  $\yb^s(t), \eb^s(t),  \{\z^{\yb^s}_v(t),\z^{\eb^s}_v(t)\}_{v \in \Sigma^{+}}$ 
	belong to $\mathcal{H}_{t,+}^{\vb}$, 
	the statement follows. 
\end{pf}
\begin{Lemma}\label{decomp:lemma:inv:pf2}
	$\r=\begin{bmatrix}  (\eb^s)^T & \ub^T \end{bmatrix}^T$ is a white noise process w.r.t. $\p$ and $E[\eb^s(t)\ub^T(t)\p_{\sigma}^2(t)]=0$ for all $\sigSet$.
\end{Lemma}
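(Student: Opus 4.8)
The plan is to show that the augmented process $\r = \begin{bmatrix} (\eb^s)^T & \ub^T\end{bmatrix}^T$ inherits the white-noise-w.r.t.-$\p$ property from its two constituents, the key being that the "cross terms" between $\eb^s$ and $\ub$ vanish. First I would verify that $\r$ is ZMWSII: the pair $(\eb^s,\ub)$ is jointly wide-sense stationary with the correct conditional-independence structure, since $\ub$ is ZMWSII by Assumption~\ref{asm:main}, $\eb^s$ is ZMWSII (being the innovation of the ZMWSII process $\yb^s$, cf.\ \cite{PetreczkyBilinear}), and by Lemma~\ref{decomp:lemma:inv:pf2.1} the past of $\eb^s$ lives in $\mathcal{H}_{t,+}^{\vb}$, which by Lemma~\ref{decomp:lemma:inv:pf2.2} is orthogonal to $\mathcal{H}_{t,+}^{\ub}$; combining orthogonality with the conditional-independence hypotheses on $\vb$ (inherited from the sGLSS definition) gives the required conditional independence of $\filtr$ and $\filtp$ w.r.t.\ $\filt$ for the augmented process.

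Next I would handle the covariance/whiteness structure. The block-diagonal covariances $E[\z^{\eb^s}_w(t)(\z^{\eb^s}_v(t))^T]$ and $E[\z^{\ub}_w(t)(\z^{\ub}_v(t))^T]$ already satisfy the white-noise-w.r.t.-$\p$ pattern — $\eb^s$ because it is the innovation process and whiteness of innovations w.r.t.\ $\p$ is established in \cite{PetreczkyBilinear}, and $\ub$ by Assumption~\ref{asm:main}(1). The off-diagonal blocks $E[\z^{\eb^s}_w(t)(\z^{\ub}_v(t))^T]$ must be shown to vanish for all $w,v$: since the entries of $\z^{\eb^s}_w(t)$ lie in $\mathcal{H}_{t,+}^{\vb}$ (Lemma~\ref{decomp:lemma:inv:pf2.1}) and the entries of $\z^{\ub}_v(t)$ lie in $\mathcal{H}_{t,+}^{\ub}$, Lemma~\ref{decomp:lemma:inv:pf2.2} gives orthogonality, hence these blocks are zero. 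The same argument kills $E[\z^{\eb^s}_w(t)(\ub(t))^T]$ and $E[\eb^s(t)(\z^{\ub}_v(t))^T]$. Thus the covariance of $\z^{\r}_w(t)$ against $\z^{\r}_v(t)$ is block-diagonal and matches the white-noise pattern, and nonsingularity of $E[\z^{\r}_\sigma(t)(\z^{\r}_\sigma(t))^T]$ follows from nonsingularity of the two diagonal blocks (the $\eb^s$ block being nonsingular since $\eb^s$ is a bona fide innovation process, and the $\ub$ block by Assumption~\ref{asm:main}(1)).

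For the final assertion $E[\eb^s(t)\ub^T(t)\p_\sigma^2(t)]=0$: I would note that $\p_\sigma^2(t)\,\ub(t)$ is (up to normalization) $\sqrt{p_\sigma}\,\z^{\ub}_\sigma(t+1)$ when expressed via \eqref{eqn:zwu}, or more directly that $\eb^s(t)$ is orthogonal to the span of $\{\p_\sigma(t)\p_{\sigma'}(t)\ub(t)\}$; using $\sum_i \alpha_i \p_i(t)=1$ to write $\p_\sigma^2(t)\ub(t)$ as a combination of terms each of which lies in (a time-shift of) $\mathcal{H}_{t,+}^{\ub}$, and invoking Lemma~\ref{decomp:lemma:inv:pf2.2} (that $\eb^s(t)$ is orthogonal to $\mathcal{H}_{t,+}^{\ub}$), the claimed expectation is zero. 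A cleaner route: observe that $E[\eb^s(t)\ub^T(t)\p_\sigma^2(t)]$ is, up to a factor $p_\sigma$, an off-diagonal block of $E[\z^{\eb^s}_\sigma(t+1)(\z^{\ub}_\sigma(t+1))^T]$, which we have just shown vanishes.

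\textbf{The main obstacle} I anticipate is the careful bookkeeping in the conditional-independence part — specifically, confirming that the $\sigma$-algebra $\filtr$ generated by the past of the augmented $\r$ (which mixes past of $\eb^s$ and past of $\ub$) is still conditionally independent of $\filtp$ given $\filt$, given that $\eb^s$ is only defined via an infinite linear (projection) operation on $\yb^s$ and hence on $\vb$; one has to argue that this operation does not enlarge the relevant $\sigma$-algebra beyond what is controlled by the hypotheses on $\vb$ and $\ub$ in the sGLSS definition, appealing to the mean-square-limit representation \eqref{decomp:lemma:pf4:eq2} and closedness of the relevant subspaces. Everything else is orthogonality bookkeeping that the preceding lemmas have already set up.
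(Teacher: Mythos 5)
Your proposal is correct and follows essentially the same route as the paper: show $\r$ is ZMWSII, use the orthogonality of $\mathcal{H}_{t,+}^{\vb}$ and $\mathcal{H}_{t,+}^{\ub}$ (Lemmas \ref{decomp:lemma:inv:pf2.1}--\ref{decomp:lemma:inv:pf2.2}) to make all cross blocks $E[\z^{\eb^s}_w(t)(\z^{\ub}_v(t))^T]$ vanish, conclude whiteness from block-diagonality plus whiteness of $\eb^s$ and $\ub$, and obtain $E[\eb^s(t)\ub^T(t)\p_\sigma^2(t)]=0$ as a vanishing cross term. The one step where the paper is more precise than your sketch is the conditional-independence condition, which is the obstacle you yourself flagged: it is obtained not from orthogonality but by noting that $\eb^s(t) \in \mathcal{H}_{t,+}^{\vb}$ makes $\eb^s(t)$ measurable w.r.t.\ $\mathcal{F}^{\mathbf{w}}_t \lor \mathcal{F}^{\p,-}_t$ with $\mathbf{w}=\begin{bmatrix} \vb^T & \ub^T\end{bmatrix}^T$, and then invoking \cite[Proposition 2.4]{vanputten1985} to transfer the conditional independence of $\mathcal{F}^{\mathbf{w}}_t$ and $\mathcal{F}^{\p,+}_t$ given $\mathcal{F}^{\p,-}_t$ to the join $\mathcal{F}^{\mathbf{w}}_t \lor \mathcal{F}^{\p,-}_t$, hence to $\mathcal{F}^{\r}_t$.
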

\begin{pf}
	We first show that $\r$ is a ZMWSII, by showing that $\r$  satisfies the conditions of  Definition \ref{def:ZMWSSI} one by one. 
	First, we show that the processes $\r(t),\z_w^{\r}(t),w \in \Sigma^{+}$ is zero mean, square integrable.
	
	By assumption  $\ub$ is a ZMWSII and white noise process w.r.t. $\p$.
	From the fact that $\hat{\mathbf{S}}_s$ is an asGLSS of $(\yb^s,\p)$ it follows that $\eb^s$ is also a ZMWSII. 
	 Thus $\eb^s(t),\ub(t), \{\z_w^{\eb^s}(t),\z_{w}^{\ub}(t)\}_{w \in \Sigma^{+}}$ is zero mean, square integrable.
	 From this it follows that  $\r(t)$ and
	$\z_w^{\r}(t)$ 
are zero mean and square integrable.

	From Lemma \ref{decomp:lemma:inv:pf2.1} it follows that the components $\eb^s(t)$ belongs to $\mathcal{H}_{t,+}^{\vb}(t)$. 
	Moreover, by definition of sGLSS, $\mathbf{w}=\begin{bmatrix} \vb^T & \ub^T \end{bmatrix}^T$ is ZMWSII. 
	Hence, with the notation of 
	Definition \ref{def:ZMWSSI}, the $\sigma$-algebras $\mathcal{F}_t^{\mathbf{w}}$ and $\mathcal{F}_t^{\p,+}$ are conditionally independent  
      w.r.t. $\mathcal{F}_t^{\p,-}$. 
	From the fact that $\eb^s(t)$ belongs to $\mathcal{H}_{t,+}^{\vb}$ it follows that $\eb^s(t)$ is measurable with respect to the $\sigma$-algebra generated by
	$\{\vb(t)\} \cup \{ \z_v^{\vb}(t)\}_{v \in \Sigma^{+}}$ and the latter $\sigma$-algebra is a subset of $\mathcal{F}^{\mathbf{w}}_t \lor \mathcal{F}^{\p,-}_t$, 
	where for two $\sigma$-algebras $\mathcal{F}_i$, $i=1,2$, $\mathcal{F}_1 \lor \mathcal{F}_2$ denotes the smallest $\sigma$-algebra generated by the $\sigma$-algebras $\mathcal{F}_1,\mathcal{F}_2$. 
	That is, $\eb^s(t)$ is measurable w.r.t. the $\sigma$ algebra $\mathcal{F}^{\mathbf{w}}_t \lor \mathcal{F}^{\p,-}_t$. Hence,
	$\mathcal{F}_t^{\r} \subseteq \mathcal{F}^{\mathbf{w}}_t \lor \mathcal{F}^{\p,-}_t$. 
Since  $\mathcal{F}^{\mathbf{w}}_t$ and $\mathcal{F}^{\p,+}$ are conditionally independent w.r.t.  $\mathcal{F}^{\p,-}_t$, from \cite[Proposition 2.4]{vanputten1985} it follows that
	$\mathcal{F}^{\mathbf{w}}_t \lor \mathcal{F}^{\p,-}_t$ and $\mathcal{F}^{\p,+}_t$ are conditionally independent w.r.t. 
	$\mathcal{F}^{\p,-}_t$, and as $\mathcal{F}^{\r}_t \subseteq \mathcal{F}^{\mathbf{w}}_t \lor \mathcal{F}^{\p,-}_t$, 
	it follows that   $\mathcal{F}^{\r}_t$ and $\mathcal{F}^{\p,+}$ are conditionally independent w.r.t. $\mathcal{F}^{\p,-}_t$.

	Finally, we show that $\r(t),\z_w^{\r}(t),w \in \Sigma^{+}$
	are   jointly wide-sense stationary, i.e., for all $s,t,\in \mathbb{Z}$, $s \le t$, $v,w \in \Sigma^{+}$, 
    $E[\mathbf{h}_1(t)(\mathbf{h}_1(t)^T]$, where
	$\mathbf{h}_1(t),\mathbf{h}_2(t) \in \{\r(t)\} \cup \{\z_w^{\r}(t),w \in \Sigma^{+}\}$ does not depend on $t$.  
	We show only the case,
 $\expect{\r(t)(\zwr(s))^{T}} = \expect{\r(t-s) (\zwr(0))^{T}}$, the proof of the general case is similar.
	From Lemma \eqref{decomp:lemma:inv:pf2}  it follows that $\expect{\z^{\eb^s}_w(t+k)(\z_{v}^{\ub}(s+k))^T}=0$, and hence
    the matrix
    $\expect{\zwr(t)(\zvr(s))^{T}}$ is a block-diagonal one, where the blocks on the diagonal
    are $\expect{\z^{\eb^s}_w(t)(\z_{v}^{\eb^s}(s))^T}$ and $\expect{\z^{\ub}_w(t)(\z_{v}^{\ub}(s))^T}$ and by $\ub$ and $\e^s$ being
    ZMWSII, it follows that the latter do not
    depend on $t$.
	That is, we have shown that $\r$ satisfies all the conditions of  Definition \ref{def:ZMWSSI}.

	Next we show that $\r$ is a white noise process w.r.t. $\p$. To this end, by \cite[Lemma 7]{PetreczkyBilinear}, it is enough to
	show that $E[\r(t)(\z^{\r}_w(t))^T]=0$ for all $w \in \Sigma^{+}$. From 
	Lemma \ref{decomp:lemma:inv:pf2}
    it follows that $\expect{\r(t) (\zwr(t))^{T}}$
    is block diagonal, with the block on the diagonal
    being $\expect{\eb^s(t)(\z_{w}^{\eb^s}(t))^T}$, $\expect{\ub(t)(\z_{w}^{\ub}(t))^T}$,
    and the latter are zero as $\eb_s$ and $\ub$
    are white noise w.r.t $\p$. 
	Finally, $E[\eb^s(t)\ub^T(t)\p_{\sigma}^2(t)]=0$ for all $\sigSet$ follows from  Lemma \ref{decomp:lemma:inv:pf2.2}.
\end{pf}
\begin{pf}[Proof of Theorem \ref{decomp:lemma:inv}]
	From Lemma \ref{decomp:lemma:inv:pf2} it follows that the noise process $\eb^s$ and the input $\ub$ satisfy the condition of $E[\eb^s(t)(\ub(t))^T\p_{\sigma}^2(t)]=0$, $\sigma \in \Sigma$.
 Since $\hat{\mathbf{S}}_s$ and $\hat{\mathbf{S}}_d$ are both asGLSS, it follows that $\sum_{i=1}^{\pdim} p_{i} \hat{A}_i \otimes \hat{A}_i$ is stable.
Hence $\hat{\mathbf{S}}$ satisfies the conditions of a sGLSS.
 Let $\hat{\xb}^s$ and $\hat{\xb}^d$ be the unique state processes of $\hat{\mathbf{S}}_s$ and $\hat{\mathbf{S}}_d$ respectively. We claim that
	$\hat{\xb}(t)=\hat{\xb}^d(t)+\hat{\xb}^s(t)$ is the unique state process of $\hat{\mathbf{S}}$. 
	Indeed, $\hat{\xb}(t+1)=\sum_{i=1}^{\pdim} (\hat{A}_i\hat{\xb}()+\hat{B}_i\bu(t)+\hat{K}_i\eb^s(t))\pi_1(t)$ holds and $\hat{\xb}(t)$ is a ZMWSII, as it is
	a sum of two ZMWSII processes. Hence, $\hat{\xb}(t)$ is the uniqe state process of $\hat{\mathbf{S}}$.
Finally, 
from $\yb^d(t)=\hat{C}\hat{\xb}^d(t)+\hat{D}\ub(t)$ (as $\hat{\mathbf{S}}_d$ is an asGLSS of $(\yb^d,\p)$) and $\yb^s(t)=\hat{C}\hat{\xb}^s(t)+\eb^s(t)$ (as $\hat{\mathcal{S}}_s$ is an asGLSS of $(\yb^s,\p)$), it follows
that $\yb(t)=\hat{C}\hat{\xb}(t)+\hat{D}\ub(t)+\eb^s(t)$, i.e., $\hat{\mathbf{S}}$ is a sGLSS of $(\yb,\ub,\p)$. 
\end{pf}


\bibliographystyle{plain}
\bibliography{Bib.bib}

\begin{thebibliography}{10}

\bibitem{JumpMarkov1}
M.P. Balenzuela, A.G Wills, C.~Renton, and B.~Ninness.
\newblock Parameter estimation for jump markov linear systems.
\newblock {\em Automatica}, 135:109949, 2022.

\bibitem{Bilingsley}
P.~Bilingsley.
\newblock {\em Probability and measure}.
\newblock Wiley, 1986.

\bibitem{CostaBook}
O.L.V. Costa, M.D. Fragoso, and R.P. Marques.
\newblock {\em Discrete-Time {M}arkov Jump Linear Systems}.
\newblock Springer Verlag, 2005.

\bibitem{CoxLPVSS}
P.~Cox, M.~Petreczky, and R.\ T{\'o}th.
\newblock Towards efficient maximum likelihood estimation of {LPV-SS} models.
\newblock {\em Automatica}, 97(9):392--403, 2018.

\bibitem{CoxTothSubspace}
P.~Cox and R.~Tóth.
\newblock Linear parameter-varying subspace identification: A unified framework.
\newblock {\em Automatica}, 123:109296, 2021.

\bibitem{LauerBook}
F.~Lauer and G.~Bloch.
\newblock {\em Hybrid System Identification: Theory and Algorithms for Learning Switching Models}.
\newblock Springer, 2019.

\bibitem{LindquistBook}
A.~Lindquist and G.~Picci.
\newblock {\em Linear Stochastic Systems: A Geometric Approach to Modeling, Estimation and Identification}.
\newblock Springer Berlin, 2015.

\bibitem{MejariLPVS2019}
M.~Mejari and M.~Petreczky.
\newblock {Realization and identification algorithm for stochastic LPV state-space models with exogenous inputs}.
\newblock In {\em 3rd IFAC LPVS}, 2019.

\bibitem{mejari2019realization}
M.~Mejari and M/~Petreczky.
\newblock Realization and identification algorithm for stochastic {LPV} state-space models with exogenous inputs.
\newblock 2019.
\newblock arXiv 1905.10113.

\bibitem{PetreczkyBilinear}
M.~Petreczky and R.~Vidal.
\newblock Realization theory for a class of stochastic bilinear systems.
\newblock {\em IEEE Transactions on Automatic Control}, 63(1):69--84, 2018.

\bibitem{pigalpv}
D.\ Piga, P.\ Cox, R.\ T{\'o}th, and V.\ Laurain.
\newblock {LPV} system identification under noise corrupted scheduling and output signal observations.
\newblock {\em Automatica}, 53:329--338, 2015.

\bibitem{rouphael2022minimal}
E.~Rouphael, M.~Petreczky, and L.~Belkoura.
\newblock On minimal {LPV} state-space representations in innovation form: an algebraic characterization.
\newblock In {\em CDC}, 2022.

\bibitem{Sun:Book}
Zh. Sun and Sh.~S. Ge.
\newblock {\em Switched linear systems : control and design}.
\newblock Springer, London, 2005.

\bibitem{tanaka2023state}
H.~Tanaka and K.~Ikeda.
\newblock State estimation for closed-loop lpv system identification via kernel methods.
\newblock {\em IFAC-PapersOnLine}, 56(2):11669--11674, 2023.

\bibitem{Toth2010SpringerBook}
R.~T{\'o}th.
\newblock {\em Modeling and Identification of Linear Parameter-Varying Systems}.
\newblock Springer, 2010.

\bibitem{vanputten1985}
C.~van Putten and J.~H. van Schuppen.
\newblock Invariance properties of the conditional independence relation.
\newblock {\em Ann. Probab.}, 13(3):934--945, 1985.

\bibitem{Wingerden09}
J.~W. {van Wingerden} and M.~Verhaegen.
\newblock Subspace identification of bilinear and {LPV} systems for open- and closed-loop data.
\newblock {\em Automatica}, 45(2):372--381, 2009.

\bibitem{Verdult02}
V.~Verdult and M.~Verhaegen.
\newblock Subspace identification of multivariable linear parameter-varying systems.
\newblock {\em Automatica}, 38(5):805--814, 2002.

\bibitem{Verdult04}
V.~Verdult and M.~Verhaegen.
\newblock Subspace identification of piecewise linear systems.
\newblock In {\em CDC}, 2004.

\end{thebibliography}

\end{document}